\documentclass[a4paper,11pt,leqno]{article}

\usepackage[english]{babel}
\usepackage{amsmath,amsthm,amssymb,thmtools,enumitem,mathrsfs,mathtools,hyperref,url,doi,titlesec}
\usepackage[left=2cm,right=2cm,top=2.5cm,bottom=2.5cm,bindingoffset=0cm]{geometry}

\setlist[enumerate]{topsep=0pt,label=\textup{(\roman*)},leftmargin=\parindent,labelsep=.5em}
\setlist{noitemsep}

\setlength{\parindent}{22pt}
\setlength{\topsep}{12pt}
\setlength{\skip\footins}{1.6pc plus 5pt minus 2pt}

\usepackage{quoting}
\quotingsetup{vskip=0pt}

\usepackage[T1]{fontenc}

\usepackage{tikz-cd}

\declaretheoremstyle[
  spaceabove=\topsep, spacebelow=6pt,
  headfont=\normalfont\bfseries,
  notefont=\mdseries, notebraces={(}{)},
  bodyfont=\normalfont\itshape,
  postheadspace=.5em,
  headformat=\NUMBER.~\NAME\NOTE,%
  qed=\qedsymbol
]{mystyle}

\declaretheoremstyle[
  spaceabove=\topsep, spacebelow=6pt,
  headfont=\normalfont\bfseries,
  notefont=\mdseries, notebraces={(}{)},
bodyfont=\normalfont,
  postheadspace=.5em,
  headformat=\NUMBER.~\NAME\NOTE,%
  qed=\qedsymbol
]{mydefstyle}

\theoremstyle{mystyle}
\declaretheorem[numberlike=subsection]{proposition}
\declaretheorem[numberlike=subsection]{theorem}
\declaretheorem[numberlike=subsection]{corollary}
\declaretheorem[numberlike=subsection]{lemma}

\theoremstyle{mydefstyle}

\declaretheorem[numberlike=subsection]{remark}

\numberwithin{equation}{subsection}

\titleformat{\section}[block]
  {\filcenter\normalfont\large\bfseries}{\thesection.}{1em}{}
\titleformat{\subsection}[runin]
  {\normalfont\itshape}{\textup{\bfseries\thesubsection.}}{.5em}{}
\titleformat{\subsubsection}[runin]
  {\normalfont\itshape}{\textup{\bfseries\thesubsubsection.}}{.5em}{}

\titlespacing*{\section} {0pt}{6ex plus 1ex minus .2ex}{3 ex plus .2ex}
\titlespacing*{\subsection} {0pt}{\topsep}{.5em}
\titlespacing*{\subsubsection} {0pt}{\topsep}{.5em}

\setcounter{tocdepth}{1}

\input{macs.sty}

\setcounter{footnote}{1}


\begin{document}

\begin{center}
\textbf{\Large A note on images of Galois representations}
\bigskip

{\large (with an application to a result of Litt)}
\bigskip

\textit{by}
\bigskip

{\Large Anna Cadoret and Ben Moonen}
\end{center}
\vspace{1cm}

{\small 

\noindent
\begin{quoting}
\textbf{Abstract.} Let $X$ be a variety (possibly non-complete or singular) over a finitely generated field~$k$ of characteristic~$0$. For a prime number~$\ell$, let $\rho_\ell$ be the Galois representation on the first $\ell$-adic cohomology of~$X$. We show that if $\ell$ varies the image of~$\rho_\ell$ is of bounded index in the group of $\bbZ_\ell$-points of its Zariski closure. We use this to improve a recent result of Litt about arithmetic representations of geometric fundamental groups. Litt's result says that there exist constants $N = N(X,\ell)$ such that every arithmetic representation $\pi_1(X_\kbar) \to \GL_n(\bbZ_\ell)$ that is trivial modulo~$\ell^N$ is unipotent. We show that these constants can in fact be chosen independently of~$\ell$.
\medskip

\noindent
\textit{AMS 2010 Mathematics Subject Classification:\/} 11F80, 14F20, 14F35
\end{quoting}

} 
\vspace{.5cm}

\section*{Introduction}

\noindent
Let $k$ be a field that is finitely generated over~$\bbQ$ and let $X$ be a geometrically connected variety over~$k$. We do not assume that $X$ is complete or non-singular. For $\ell$ a prime number, let $H_\ell = \mathrm{H}^1(X_\kbar,\bbZ_\ell)$ be the $\ell$-adic cohomology in degree~$1$, on which we have a Galois representation $\rho_\ell = \rho_{\ell,X} \colon \Gal(\kbar/k) \to \GL(H_\ell)$. The Zariski closure $\sG_\ell = \sG_{\ell,X}$ of $\Image(\rho_\ell)$ is an algebraic subgroup of~$\sGL(H_\ell)$. Though our understanding of the groups~$\sG_\ell$ is still incomplete, we dispose of several highly non-trivial results about their structure; see for instance \cite{SerreRibet}, \cite{LaPi1992} and~\cite{LaPi1995}.

In this paper we are mostly concerned with the actual images $\Image(\rho_\ell)$, and especially the way they vary with~$\ell$. It can be shown that $\Image(\rho_\ell)$ is open---and hence of finite index---in~$\sG_\ell(\bbZ_\ell)$ for every~$\ell$. Our first main result is that that the index $\bigl[\sG_\ell(\bbZ_\ell) : \Image(\rho_\ell)\bigr]$ is in fact bounded:
\vspace{\topsep}

\noindent
\textbf{Theorem A.} \emph{With $X/k$ as above, $\Image(\rho_\ell)$ is an open subgroup of\/~$\sG_\ell(\bbZ_\ell)$ for every~$\ell$ and the index $\bigl[\sG_\ell(\bbZ_\ell) : \Image(\rho_\ell)\bigr]$ is bounded when $\ell$ varies. The same is true if we everywhere replace $\mathrm{H}^1(X_\kbar,\bbZ_\ell)$ by $\mathrm{H}^1_{\text{c}}\bigl(X_\kbar,\bbZ_\ell(1)\bigr)$.}
\vspace{\baselineskip}

\noindent
In either variant ($\mathrm{H}^1$ or $\mathrm{H}^1_{\text{c}}(1)$) this result (Corollary~\ref{cor:indexbdd} in the text) is in fact a consequence of a similar, but more general, result about Galois representations associated with $1$-motives; see Theorem~\ref{thm:Main}. The proof of the result about $1$-motives occupies most of Section~\ref{sec:Indices}. It has three main ingredients. To handle the case of an abelian variety, in which case $\sG_\ell$ is reductive over~$\bbZ_\ell$ for almost all~$\ell$, we use a result of Wintenberger~\cite{WintenbLang} to control the derived group, and we use techniques from our previous paper~\cite{CadMoon} to control the abelian part. To perform the step from abelian varieties to general $1$-motives we then have to study the unipotent radicals of the groups~$\sG_\ell$; here we make essential use of results of Jossen~\cite{Jossen}.
\vspace{\baselineskip}

\noindent
We apply Theorem~A to improve a recent result of Litt~\cite{Litt}. Let $X/k$ be as above and assume $X$ is normal and geometrically integral. Following Litt, we say that a representation of the geometric fundamental group $\tau \colon \pi(X_\kbar) \to \GL_n(\bbZ_\ell)$ is \emph{arithmetic} if, possibly after replacing~$k$ by a finite extension, it appears as a subquotient of a representation of the arithmetic fundamental group~$\pi_1(X)$. (We leave out base points from the notation.) Litt's remarkable result is that we can obtain important information about arithmetic representations from their truncations modulo a power of~$\ell$ that does not depend on the representation. In particular he proves (\cite{Litt}, Theorem~1.2) that there exists a constant~$N$, depending on $X$ and~$\ell$ but not on~$\tau$, such that every arithmetic representation $\tau \colon \pi_1(X_\kbar) \to \GL_n(\bbZ_\ell)$ that is trivial modulo~$\ell^N$ is in fact unipotent. Our second main result gives the improvement that the dependence on~$\ell$ can be eliminated:
\vspace{\topsep}

\noindent
\textbf{Theorem B.} \emph{With $X/k$ as above, there exists an integer~$\ell_X$ such that for every prime $\ell \geq \ell_X$, every  arithmetic representation $\tau \colon \pi_1(X_\kbar) \to \GL_n(\bbZ_\ell)$ that is trivial modulo~$\ell$ is unipotent. In particular, in Litt's result the constant~$N$ can be chosen depending only on~$X$, independently of~$\ell$.}
\vspace{\baselineskip}

\noindent
The proof is given in Section~\ref{sec:Litt}.

\subsection*{Notation and conventions.}\label{ssec:Notat}
If $\sG$ is an algebraic group, $\sG^\der$ denotes its derived subgroup. We write~$\sG^\ab$ for $\sG/\sG^\der$ and $\ab \colon \sG \to \sG^\ab$ for the canonical map.

If $\sG$ is a group scheme over a field, $\sG^\idcomp$ denotes its identity component. If $\sG$ is a group scheme over a Dedekind domain with generic point $\eta$, then by $\sG^\idcomp$ we mean the closed subgroup scheme of~$\sG$ whose generic fibre is~$(\sG_\eta)^\idcomp$.

Unless indicated otherwise, reductive group schemes are assumed to have connected fibres. If $\sG$ is a reductive group scheme over a ring~$R$ (for us usually $R = \bbZ_\ell$), let $p \colon \sG^\sconn \to \sG^\der$ be the simply connected cover of its derived subgroup; then we define
\[
\sG^\der(R)_\mathrm{u} = \Image\bigl(p\colon \sG^\sconn(R) \to \sG^\der(R)\bigr)\, .
\]
By the rank of a reductive group over a connected base scheme we mean the absolute rank of its fibres.

If $R$ is a ring and $H$ is a free $R$-module of finite type, we denote by $\sGL(H)$ the associated reductive group over~$R$ and by $\GL(H) = \sGL(H)\bigl(R\bigr)$ the (abstract) group of $R$-linear automorphisms of~$H$.

\section{Indices of images of Galois representations in their Zariski closure}
Let $k$ be a finitely generated extension of $\bbQ$.
\label{sec:Indices}

\subsection{Notation.}
\label{ssec:1MotNot}
We refer to \cite{DelHodge3}, Section~10, and \cite{Jossen}, Section~1, for the basic notions of $1$-motives. Let $M$ be a $1$-motive over $k$. For $\ell$ a prime number, let $T_\ell(M)$ be the $\ell$-adic realization of~$M_\kbar$, and let
\[
\rho_{\ell,M} \colon \Gal(\kbar/k) \to \GL\bigl(T_\ell(M)\bigr)
\]
be the associated Galois representation. We denote by $\sG_{\ell,M} \subset \sGL\bigl(T_\ell(M)\bigr)$ the Zariski closure of the image of~$\rho_{\ell,M}$, which is a subgroup scheme of~$\sGL\bigl(T_\ell(M)\bigr)$, flat over~$\bbZ_\ell$.

\begin{theorem}\label{thm:Main}
With $M/k$ as above, $\Image(\rho_{\ell,M})$ is an open subgroup of ~$\sG_{\ell,M}(\bbZ_\ell)$ for every~$\ell$ and $\bigl[\sG_{\ell,M}(\bbZ_\ell) : \Image(\rho_{\ell,M})\bigr]$ is bounded when $\ell$ varies.
\end{theorem}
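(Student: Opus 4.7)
The plan is to analyze $\sG_{\ell,M}$ via the weight filtration of~$M$, whose pure graded pieces are, in weights $0, -1, -2$, a lattice~$L$, an abelian variety~$A$, and a torus~$T$. This filtration is Galois-stable and induces a canonical short exact sequence of group schemes over~$\bbZ_\ell$,
\[
1 \to \sU_{\ell,M} \to \sG_{\ell,M} \to \sG_{\ell,M}^{\mathrm{red}} \to 1,
\]
in which $\sU_{\ell,M}$ is the unipotent radical and $\sG_{\ell,M}^{\mathrm{red}}$ is controlled by the induced action on the associated graded. My plan is to prove openness and the uniform-in-$\ell$ index bound separately for the image of $\rho_{\ell,M}$ in~$\sG_{\ell,M}^{\mathrm{red}}(\bbZ_\ell)$ and for its intersection with~$\sU_{\ell,M}(\bbZ_\ell)$; combining the two via a standard five-lemma argument then yields the theorem. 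Openness itself follows from Zariski density of the image together with its compactness, by standard $\ell$-adic Lie theory.

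For the reductive quotient I would decompose~$\sG_{\ell,M}^{\mathrm{red}}$, up to isogeny, into factors arising from~$A$ and from the pair $(L,T)$. The $(L,T)$-factor has abelian image after a finite extension of~$k$ and is governed by a finite system of continuous characters whose ramification data depend only on~$M$; this produces the needed uniform bound. For the $A$-factor I would invoke, for almost all~$\ell$, the theorem of Wintenberger~\cite{WintenbLang} asserting that $\sG_{\ell,A}$ is smooth reductive over~$\bbZ_\ell$ and that the image of $\rho_{\ell,A}$ meets the image of $\sG_{\ell,A}^\sconn(\bbZ_\ell)$ in $\sG_{\ell,A}^\der(\bbZ_\ell)$ with bounded index, giving the derived-group part. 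The passage from the derived group to the whole reductive group~$\sG_{\ell,A}$ is then handled by controlling the image in~$\sG_{\ell,A}^\ab(\bbZ_\ell)$ through the techniques developed in~\cite{CadMoon}.

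For the unipotent radical I would rely on Jossen's work~\cite{Jossen}. The radical $\sU_{\ell,M}$ is a successive extension whose graded pieces are $\Hom$-spaces between the $\ell$-adic realizations of the pure pieces, arising geometrically from extensions classified by points of abelian varieties and tori over~$k$. An analogue of the Bashmakov--Ribet theorem for $1$-motives, proved by Jossen, then yields for almost all~$\ell$ a lower bound on the image of $\rho_{\ell,M}$ in $\sU_{\ell,M}(\bbZ_\ell)$ that depends only on invariants of~$M$ and is uniform in~$\ell$, provided the image of $\rho_{\ell,M}$ in the reductive quotient is sufficiently large---a condition that is guaranteed, for almost all~$\ell$, by the previous step.

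The main obstacle, I expect, is verifying that the uniform control secured in the reductive step is strong enough to trigger Jossen's large-image results uniformly in~$\ell$ rather than one prime at a time; one must translate ``bounded index in $\sG_{\ell,M}^{\mathrm{red}}(\bbZ_\ell)$'' into the non-degeneracy hypotheses in Jossen's theorems in a way that is independent of~$\ell$. A secondary technical nuisance is the existence of finitely many primes at which $\sG_{\ell,M}$ fails to be smooth over~$\bbZ_\ell$ or at which Wintenberger's hyperspeciality is not available; these can only be treated by hand, but contribute only a finite correction to the final bound.
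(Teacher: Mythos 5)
Your overall architecture matches the paper closely: both proofs decompose the Zariski closure along the weight filtration, handle the reductive quotient via Wintenberger's hyperspeciality result plus the Shimura-variety techniques of \cite{CadMoon}, handle the unipotent radical via Jossen's results on $1$-motives, and combine the two via a diagram chase. However, three points in your sketch are genuine gaps rather than routine omissions.

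First, your claim that ``openness itself follows from Zariski density of the image together with its compactness, by standard $\ell$-adic Lie theory'' is false. A Zariski-dense compact subgroup of $\sG(\bbZ_\ell)$ need not be open: for instance, $\{(t,t^\alpha) : t \in 1+\ell\bbZ_\ell\}$ with $\alpha \in \bbZ_\ell \setminus \bbQ$ is a closed, compact, Zariski-dense subgroup of $\bbG_\mult^2(\bbZ_\ell)$ of strictly smaller $\ell$-adic dimension. What is really needed is that the $\ell$-adic Lie algebra of the image is \emph{algebraic}; for abelian varieties this is precisely Bogomolov's theorem~\cite{Bogomolov}, and for the unipotent part the paper extracts the analogous statement from Jossen's Theorems~6.2 and~7.2. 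You cannot get openness for free.

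Second, the reductive quotient $\sG_{\ell,\tilde M}$ does \emph{not} split, even up to isogeny, into a factor coming from $A$ and a factor coming from $(L,T)$. The torus $T$ acts through the cyclotomic character~$\chi_\ell$, and this same character appears as the multiplier of the symplectic pairing on~$T_\ell(A)$, so the $T$-part and the $A$-part are coupled through the abelianization. The paper's resolution (after reducing to $Y=0$, $T$ split, $A \neq 0$) is to choose a polarization of~$A$ so that $\rho_{\ell,A}$ lands in a similitude group with multiplier~$\nu$ satisfying $\nu \circ \rho_{\ell,A} = \chi_\ell$; it follows that $\rho_{\ell,\tilde M}$ factors through the graph of~$\nu$, so the projection $\sG_{\ell,\tilde M} \to \sG_{\ell,A}$ is injective on points and the bound reduces to the abelian-variety case. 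Your plan of treating the $(L,T)$-part by a ``finite system of continuous characters'' independent of~$A$ does not account for this coupling and will not yield the needed bound on the index in~$\sG_{\ell,\tilde M}(\bbZ_\ell)$.

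Third, you rely on \cite{CadMoon} for the abelian part of~$\sG_{\ell,A}$, but those Shimura-variety arguments require the base to be a number field, whereas $k$ is only assumed finitely generated over~$\bbQ$. The paper performs an explicit reduction to the number-field case using Serre's specialization result together with Cadoret's adelic open image theorem. This step cannot be omitted.
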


The proof consists of several steps. We start with a lemma.

\begin{lemma}\label{lem:GroupLemma}
\begin{enumerate}
\item\label{itm:Isogeny} Let $\sG$ be a reductive group over~$\bbZ_\ell$. Let $\sZ$ be the centre of\/~$\sG$, so that $q\colon \sZ^\idcomp \to \sG^\ab$ is an isogeny. Then there is a constant $C_1$, depending only  on the rank of\/~$\sG$, such that $q$ has degree at most~$C_1$.
\item\label{itm:scCover} Given a positive integer~$r$ there exist constants $C_2$ and $\ell_0$, depending only on~$r$, such that the following holds: For every prime number $\ell \geq \ell_0$ and every reductive group~$\sG$ over~$\bbZ_\ell$ of rank at most~$r$, the subgroup $\sG^\der(\bbZ_\ell)_\mathrm{u} \subset \sG^\der(\bbZ_\ell)$ (see the end of the Introduction for notation) has index at most~$C_2$.
\end{enumerate}
\end{lemma}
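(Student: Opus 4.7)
For~(i), the kernel of~$q$ is the finite flat $\bbZ_\ell$-group scheme $\sZ^\idcomp \cap \sG^\der$, which is contained in the scheme-theoretic centre of the semisimple group~$\sG^\der$. I would bound its order by passing to the generic fibre: over $\bbQ_\ell$, the centre of a semisimple group of rank~$\le r$ has order dividing the index $[P:Q]$ of the root lattice in the weight lattice of its root system, and only finitely many root systems of rank~$\le r$ exist, so this index is bounded by a constant $c(r)$. Flatness of the kernel over~$\bbZ_\ell$ then gives $\deg(q) \le c(r)$.

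For~(ii), set $F = \ker\bigl(p\colon \sG^\sconn \to \sG^\der\bigr)$. Since $F$ is central in $\sG^\sconn$, the same root-system argument gives $|F| \le c(r)$. Choose $\ell_0$ strictly greater than $c(r)$; then for every prime $\ell \ge \ell_0$ one has $\gcd(\ell, |F|) = 1$, so $F$ is étale over~$\bbZ_\ell$ and $p$ is an étale isogeny.

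The remaining argument splits into a mod-$\ell$ computation and a lifting step. On the special fibre, $\sG^\sconn_{\bbF_\ell}$ is connected, so Lang's theorem gives $H^1(\bbF_\ell, \sG^\sconn) = 1$. The short exact sequence $1 \to F \to \sG^\sconn \to \sG^\der \to 1$ then yields
\begin{equation*}
1 \to F(\bbF_\ell) \to \sG^\sconn(\bbF_\ell) \to \sG^\der(\bbF_\ell) \to H^1(\bbF_\ell, F) \to 1,
\end{equation*}
so $\bigl[\sG^\der(\bbF_\ell) : p(\sG^\sconn(\bbF_\ell))\bigr] = |H^1(\bbF_\ell, F)| \le |F(\bar{\bbF}_\ell)| \le c(r)$. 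To transfer this bound to $\bbZ_\ell$-points, I would use that since $p$ is étale, Hensel lifting makes $p$ an isomorphism on first congruence subgroups, i.e.\ on the kernels of reduction $\sG^\sconn(\bbZ_\ell) \twoheadrightarrow \sG^\sconn(\bbF_\ell)$ and $\sG^\der(\bbZ_\ell) \twoheadrightarrow \sG^\der(\bbF_\ell)$. Combining this with the mod-$\ell$ estimate yields
\begin{equation*}
\bigl[\sG^\der(\bbZ_\ell) : p(\sG^\sconn(\bbZ_\ell))\bigr] \;=\; \bigl[\sG^\der(\bbF_\ell) : p(\sG^\sconn(\bbF_\ell))\bigr] \;\le\; c(r),
\end{equation*}
so one may take $C_2 = c(r)$.

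The only real point of care is uniformity of the constants: the same $C_2$ must work for every reductive $\sG$ of rank~$\le r$ and every~$\ell \ge \ell_0$. The étaleness of $p$ once $\ell$ is coprime to~$|F|$ is the single technical ingredient that makes the strategy go through, by reducing the comparison of two $\bbZ_\ell$-analytic groups to comparison of their mod-$\ell$ quotients, where Lang's theorem is available.
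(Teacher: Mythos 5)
Your proposal is correct and takes essentially the same route as the paper: in part~(i) bound $\ker(q)$ via the centre of $\sG^\der$ and the finite list of root systems of rank $\le r$; in part~(ii) take $\ell_0$ large enough that $F=\ker(p)$ is \'etale of order prime to $\ell$, bound the cokernel of $p$ on $\bbF_\ell$-points using Lang's theorem and the nonabelian cohomology sequence (the paper phrases the bound on $\mathrm{H}^1(\Gal(\overline{\bbF}_\ell/\bbF_\ell),F)$ via Serre, Corps Locaux XIII.1, but it is the same estimate), and then transfer the bound to $\bbZ_\ell$-points. The one place you differ is the transfer step: you prove it directly by observing that an \'etale isogeny whose kernel has order prime to $\ell$ induces an isomorphism on first congruence subgroups, whereas the paper simply cites Proposition~1 of \cite{WintenbLang} for exactly this statement; your direct argument is correct (injectivity because the pro-$\ell$ congruence subgroup meets the prime-to-$\ell$ group $F(\bbZ_\ell)$ trivially, surjectivity by formal \'etaleness), so this is merely a matter of self-containedness rather than a different method.
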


\begin{proof}
Let $r$ be given. It follows from the classification of split semisimple groups in terms of root data that there exists a constant $C = C(r) > 0$ such that for every reductive group~$\sG$ of rank $\leq r$ over~$\bbZ_\ell$ the isogeny $p \colon \sG^\sconn \to \sG^\der$ has degree at most~$C$ and also the centre of~$\sG^\der$, which is a finite flat $\bbZ_\ell$-group scheme, has order at most~$C$. Part~\ref{itm:Isogeny} follows because the kernel of~$q$ is contained in the centre of~$\sG^\der$.

For~\ref{itm:scCover}, take $\ell_0 = C(r)!$. Let $\ell$ be a prime number with $\ell > \ell_0$, and let $\sG$ be a reductive group over~$\bbZ_\ell$ of rank at most~$r$. Define $\mu_\sG = \Ker(p\colon \sG^\sconn \to \sG^\der)$. Then $\ell$ does not divide the order of the finite group scheme~$\mu_\sG$, which is therefore finite \'etale over~$\bbZ_\ell$. Writing $\bbF = \overline{\bbF}_\ell$, Galois cohomology gives a short exact sequence
\[
\sG^\sconn(\bbF) \tto \sG^\der(\bbF) \tto \mathrm{H}^1\bigl(\Gal(\bbF/\bbF_\ell),\mu_\sG(\bbF)\bigr) \to 1\, .
\]
Since $\Gal(\bbF/\bbF_\ell) \cong \Zhat$, the $\mathrm{H}^1$ that appears is a subquotient of $\mu_\sG(\bbF)$ (see \cite{SerreCL}, Section~XIII.1), and therefore $\bigl[\sG^\der(\bbF_\ell):\sG^\der(\bbF_\ell)_\mathrm{u}\bigr] \leq C(r)$. By Proposition~1 of~\cite{WintenbLang} it follows that $\bigl[\sG^\der(\bbZ_\ell): \sG^\der(\bbZ_\ell)_\mathrm{u}\bigr] \leq C(r)$.
\end{proof}

\subsection{The case of an abelian variety.}\label{AV} We now prove Theorem~\ref{thm:Main} when $M=A$ is an abelian variety.

\subsubsection{}
As a first step we reduce the problem to the case where the base field is a number field. Since $k$ is finitely generated over $\bbQ$, there exists an integral scheme~$S$ of finite type over~$\bbQ$, with generic point~$\eta$, and an abelian scheme $X \to S$ such that $k$ is the function field of~$S$ and $A$ is isomorphic to the generic fiber~$X_\eta$ of~$X/S$. Choose a prime number~$\ell$. By a result of Serre (see~\cite{SerreRibet}) there exists a closed point $s \in S$ such that the image of the Galois representation~$\rho_{\ell,X_s}$ is the same, via a specialization isomorphism $\spcl\colon T_{\ell}(X_\eta) \isomarrow T_\ell(X_s)$, as the image of~$\rho_{\ell,X_\eta}$. (Note that Serre's result can also be applied to finitely many prime numbers~$\ell$ at the same time, but not to infinitely many~$\ell$.) It then follows from \cite{Cadoret}, Theorem~1.2, that the image of the adelic Galois representation
\[
\rho_{\Zhat,X_s} \colon \Gal(\kbar/k) \to \prod_{\ell} \GL\bigl(T_\ell(X_s)\bigr)
\]
is open in the image of~$\rho_{\Zhat,X_\eta}$. This implies that $\sG_{\ell,X_s}(\bbZ_\ell)$ has bounded index in $\sG_{\ell,X_\eta}(\bbZ_\ell)$, where again we compare the two via the map~$\spcl$. So it suffices to prove the result for~$X_s$ over the number field~$\kappa(s)$.

\subsubsection{}
Now assume that $k$ is a number field. To simplify notation, write $\rho_\ell = \rho_{\ell,A}$ and $\sG_\ell = \sG_{\ell,A}$. As we may replace $k$ by a finite extension, we may further assume that the group schemes~$\sG_\ell$ have connected fibers. (See \cite{SerreRibet} or \cite{LaPi1992}, Proposition~6.14. It actually suffices to assume that all $n$-torsion points of~$A$ are $k$-rational for some $n \geq 3$.) Moreover, since by \cite{Bogomolov}, Theorem~1, we know that $\Image(\rho_\ell)$ is open in~$\sG_\ell(\bbZ_\ell)$ for every~$\ell$, we may exclude finitely many prime numbers~$\ell$ from our considerations. By \cite{LaPi1995}, Proposition~1.3 together with \cite{WintenbLang}, Theorem~2, the set~$\cL$ of primes numbers~$\ell$ for which the group scheme~$\sG_\ell$ is reductive and $\Image(\rho_\ell)$ contains $\sG_\ell^\der(\bbZ_\ell)_\unip$ contains all but finitely many~$\ell$. Hence it suffices to find a constant~$C$ such that $\bigl[\sG_\ell(\bbZ_\ell) : \Image(\rho_\ell)\bigr] \leq C$ for almost all $\ell \in \cL$.

Writing $\rho_\ell^\ab \colon \Gal(\kbar/k) \to \sG_\ell^\ab(\bbZ_\ell)$ for the composition of~$\rho_\ell$ and the canonical map $\ab\colon \sG_\ell (\bbZ_\ell)\to \sG_\ell^\ab(\bbZ_\ell)$, we have a commutative diagram with exact rows
\[
\begin{tikzcd}
1 \ar[r] & \Image(\rho_\ell) \cap \sG_\ell^\der(\bbZ_\ell) \ar[r] \ar[d,hook] & \Image(\rho_\ell) \ar[r] \ar[d,hook] & \Image(\rho_\ell^\ab) \ar[r] \ar[d,hook] & 1\\
1 \ar[r] & \sG_\ell^\der(\bbZ_\ell) \ar[r] & \sG_\ell(\bbZ_\ell) \ar[r] & \sG_\ell^\ab(\bbZ_\ell) &
\end{tikzcd}
\]
As the reductive groups~$\sG_\ell$, for $\ell \in \cL$, all have rank at most $2\cdot \dim(A)$, it follows from Lemma~\ref{lem:GroupLemma}\ref{itm:scCover} that there is an $\ell_0$ and a constant~$C_2$ such that $\Image(\rho_\ell) \cap \sG_\ell^\der(\bbZ_\ell)$ has index at most~$C_2$ in~$\sG_\ell^\der(\bbZ_\ell)$ for all $\ell \geq \ell_0$ in~$\cL$. It now only remains to find a bound for the index of~$\Image(\rho_\ell^\ab)$ in~$\sG_\ell^\ab(\bbZ_\ell)$.

\subsubsection{}
Choose an embedding $k \hookrightarrow \bbC$ and let $T_\Betti = \mathrm{H}_1(A_\bbC,\bbZ)$. Let $\sG_\Betti \subset \sGL(T_\Betti)$ be the (integral) Mumford--Tate group of~$A_\bbC$, which is part of a Shimura datum $(\sG_\Betti,X)$. Let $(\sG^\ab,X^\ab)$ be the associated abelian Shimura datum. Choose neat compact open subgroups $K \subset \sG_\Betti(\Zhat)$ and $K^\ab \subset \sG_\Betti^\ab(\Zhat)$ with $\ab(K) \subseteq K^\ab$. Possibly after replacing~$k$ with a finite extension (which we may do), there exists a level~$K$ structure on~$A$, and the choice of such allows us to associate with~$A$ a $k$-rational point~$s$ on the Shimura variety $\Sh_K(\sG_\Betti,X)$. Let $S$ denote the irreducible component of~$\Sh_K(\sG_\Betti,X)$ containing~$s$, and let $S^\ab \subset \Sh_{K^\ab}(\sG_\Betti^\ab,X^\ab)$ be its image.

In Section~3 of~\cite{CadMoon} we have defined representations $\phi \colon \pi_1(S) \to K$ and $\phi^\ab \colon \pi_1(S^\ab) \to K^\ab$ that fit into a commutative diagram
\[
\begin{tikzcd}
\pi_1(S) \ar[r,"\phi"] \ar[d] & K \ar[d,"\ab"]\\
\pi_1(S^\ab) \ar[r,"\phi^\ab"] & K^\ab
\end{tikzcd}
\]
(As explained in loc.\ cit., these representations are essentially independent of the choice of geometric base point, which we therefore omit from the notation.) Moreover, it is shown in ibid., Section~5, that the image of~$\phi$ in $K \subset \sG_\Betti(\Zhat)$ equals the image of~$\rho_{\Zhat,A}$ in $\prod_\ell\, \sG_\ell(\bbZ_\ell) \subset \sG_\Betti(\Zhat)$. By ibid., Proposition~3.5 and Corollary~3.7 (applied to $(\sG^\ab,X^\ab)$) it follows that the image of the composite homomorphism
\[
\Gal(\kbar/k) \xrightarrow{~\rho_\ell^\ab~} \sG_\ell^\ab(\bbZ_\ell) \tto \sG_\Betti^\ab(\bbZ_\ell)
\]
has bounded index in~$\sG_\Betti^\ab(\bbZ_\ell)$, for varying~$\ell$.

By \cite{UllYafQuebec}, Corollary~2.11, or \cite{Vasiu}, Theorem~1.3.1, the Mumford--Tate conjecture is true on connected centers. More precisely: under the inclusion $\sG_\ell \hookrightarrow \sG_\Betti \otimes \bbZ_\ell$ the connected center $\sZ_\ell^\idcomp$ of~$\sG_\ell$ maps into $\sZ_\Betti \otimes \bbZ_\ell$ and induces an isomorphism $\sZ_\ell^\idcomp \isomarrow \sZ_\Betti^\idcomp \otimes \bbZ_\ell$. (We only need to consider the primes $\ell \in \cL$, for which $\sZ_\ell^\idcomp$ and~$\sZ_\Betti^\idcomp \otimes \bbZ_\ell$ are tori over~$\bbZ_\ell$.) By Lemma~\ref{lem:GroupLemma}\ref{itm:Isogeny} it follows that the natural homomorphisms $\sG_\ell^\ab \to \sG_\Betti^\ab \otimes \bbZ_\ell$ are isogenies of bounded degree, and therefore the index of~$\Image(\rho_\ell^\ab)$ in~$\sG_\ell^\ab(\bbZ_\ell)$ is bounded when $\ell$ varies. This completes the proof of Theorem~\ref{thm:Main} in the case where $M = A$ is an abelian variety.

\begin{remark}
The proof shows that for $\ell \gg 0$ (depending on~$A$) the index $\bigl[\sG_\ell(\bbZ_\ell) : \Image(\rho_\ell)\bigr]$ can be bounded by a constant that only depends on the Mumford--Tate group~$\sG_\Betti$ and the abelianized Shimura datum $(\sG_\Betti^\ab,X^\ab)$.
\end{remark}

\subsection{The case of a split $1$-motive.}
Next, we consider a split $1$-motive $M = [Y \xrightarrow{0} A \times T]$ (i.e., a $1$-motive in which all extensions are trivial).

As before we may replace~$k$ with a finite extension. We may therefore assume that $T$ is a split torus and that $Y$ is constant. Then the projection $\sG_{\ell,M} \to \sG_{\ell,A\times T}$ is an isomorphism and restricts to an isomorphism $\Image(\rho_{\ell,M}) \isomarrow \Image(\rho_{\ell,A\times T})$; this reduces the problem to the case $Y=0$.

The case where $T$ is trivial is treated in \ref{AV}, so we assume $T \neq \{1\}$. As $T$ is a split torus, the Galois group $\Gal(\kbar/k)$ acts on $T_\ell(T)$ through the $\ell$-adic cyclotomic character~$\chi_\ell$. If the abelian variety~$A$ is zero, then $\Image(\rho_{\ell,M}) \isomarrow \Image(\chi_\ell)$ for all~$\ell$, and the fact that $k\cap \bbQ^\ab$ is a finite extension of~$\bbQ$ gives the desired conclusion that $\Image(\rho_{\ell,M})$ is of bounded index in~$\bbZ_\ell^\times$.

Assume then that $Y=0$ and $A \neq 0$. The group scheme $\sG_{\ell,M}$ is a closed subgroup scheme of $\sG_{\ell,A} \times \sG_{\ell,T}$. Let $V_\ell(A) = T_\ell(A) \otimes \bbQ_\ell$. Choose a polarization of~$A$, and let $\psi_\ell \colon V_\ell(A) \times V_\ell(A) \to \bbQ_\ell(1)$ be the associated alternating bilinear form. The image of~$\rho_{\ell,A}$ is contained in the group of symplectic similitudes $\CSp\bigl(V_\ell(A),\psi_\ell\bigr)$, and if $\nu \colon \CSp\bigl(V_\ell(A),\psi_\ell\bigr) \to \bbQ_\ell^\times$ is the multiplier character, we have the relation $\nu \circ \rho_{\ell,A} = \chi_\ell$. It follows that the image of~$\rho_{\ell,M}$ is contained in the graph of~$\nu$, viewed as a subgroup of $\sG_{\ell,A}(\bbQ_\ell) \times \sG_{\ell,T}(\bbQ_\ell)$ and hence the projection map $\sG_{\ell,M}(\bbQ_\ell) \to \sG_{\ell,A}(\bbQ_\ell)$ is injective. This gives us a commutative diagram
\[
\begin{tikzcd}
\Image(\rho_{\ell,M}) \ar[r,hook] \ar[d,"\wr"] & \sG_{\ell,M}(\bbZ_\ell) \ar[r,hook] \ar[d,hook] & \sG_{\ell,M}(\bbQ_\ell) \ar[d,hook]\\
\Image(\rho_{\ell,A}) \ar[r,hook] & \sG_{\ell,A}(\bbZ_\ell) \ar[r,hook] & \sG_{\ell,A}(\bbQ_\ell)
\end{tikzcd}
\]
and it follows that $\bigl[\sG_{\ell,M}(\bbZ_\ell) : \Image(\rho_{\ell,M})\bigr] \leq \bigl[\sG_{\ell,A}(\bbZ_\ell) : \Image(\rho_{\ell,A})\bigr]$. The theorem for~$M$ now follows from the case of an abelian variety treated in \ref{AV}.

\subsection{The general case.}
As a last step in the proof, we now turn to the case of a general $1$-motive $M = [Y \to G]$.

\subsubsection{}
The semi-abelian variety~$G$ is an extension of an abelian variety~$A$ by a torus~$T$. On~$M$ we have a weight filtration whose graded pieces are $T$, $A$ and~$Y$, respectively. Let $\tilde{M} = T \oplus A \oplus Y$ (or, in different notation, $\tilde{M} = [Y \xrightarrow{0} A\times T]$) be the associated total graded, which is a split $1$-motive.

As before we may replace~$k$ with a finite extension. We therefore can, and from now on will, assume that $T$ is a split torus, that $Y$ is constant, and that all $3$-torsion points of~$A$ are $k$-rational. This implies that the group schemes $\sG_{\ell,M}$ and~$\sG_{\ell,\tilde{M}}$ have connected fibers.

\subsubsection{}
Let $W$ denote the weight filtration on~$T_\ell(M)$. The Galois representation~$\rho_{\ell,M}$ takes values in the parabolic subgroup $\sStab_W \subset \sGL\bigl(T_\ell(M)\bigr)$. We have a natural identification $\gr^W\bigl(T_\ell(M)\bigr) \isomarrow T_\ell(\tilde{M})$ and if $\pi \colon \sStab_W \to \sGL\bigl(T_\ell(\tilde{M})\bigr)$ is the natural homomorphism then we have $\pi \circ \rho_{\ell,M} = \rho_{\ell,\tilde{M}}$. In particular, $\pi$~restricts to a homomorphism $\sG_{\ell,M} \to \sG_{\ell,\tilde{M}}$. Let $\sU_{\ell,M}$ denote the kernel of the latter homomorphism, which is the intersection of~$\sG_{\ell,M}$ with the unipotent radical of~$\sStab_W$.

Let $k\subset k^\unip$ be the field extension (inside~$\kbar$) that corresponds with the kernel of~$\rho_{\ell,\tilde{M}}$. Then $\rho_{\ell,M}$ restricts to a Galois representation $\rho_{\ell,M}^\unip \colon \Gal(\kbar/k^\unip) \to \sU_{\ell,M}(\bbZ_\ell)$. This gives us a commutative diagram with exact rows
\[
\begin{tikzcd}
1 \ar[r] & \Image(\rho_{\ell,M}^\unip) \ar[r] \ar[d,hook] & \Image(\rho_{\ell,M}) \ar[r] \ar[d,hook] & \Image(\rho_{\ell,\tilde{M}}) \ar[r] \ar[d,hook] & 1\\
1 \ar[r] & \sU_{\ell,M}(\bbZ_\ell) \ar[r] & \sG_{\ell,M}(\bbZ_\ell) \ar[r] & \sG_{\ell,\tilde{M}}(\bbZ_\ell) &
\end{tikzcd}
\]
which gives the inequality
\[
\bigl[\sG_{\ell,M}(\bbZ_\ell) : \Image(\rho_{\ell,M}) \bigr] \leq  \bigl[\sU_{\ell,M}(\bbZ_\ell) : \Image(\rho_{\ell,M}^\unip)\bigr] \cdot \bigl[\sG_{\ell,\tilde{M}}(\bbZ_\ell) : \Image(\rho_{\ell,\tilde{M}}) \bigr]\, .
\]
(At this point we do not yet know that these numbers are all finite.)

\subsubsection{}
\label{ssec:LastStep}
By the previous part of the proof, $\bigl[\sG_{\ell,\tilde{M}}(\bbZ_\ell) : \Image(\rho_{\ell,\tilde{M}}) \bigr]$ is bounded independently of~$\ell$. What remains to be shown is that also the index of $\Image(\rho_{\ell,M}^\unip)$ in $\sU_{\ell,M}(\bbZ_\ell)$ is finite and bounded when $\ell$ varies. Our proof of this is based on the results of Jossen~\cite{Jossen}. Choose a field embedding $k \to \bbC$. Let $T_\Betti(M)$ denote the Hodge realization of~$M_\bbC$, and let $\sG_{\Betti,M} \subset \sGL\bigl(T_\Betti(M)\bigr)$ denote the integral Mumford--Tate group. Again we have a weight filtration~$W$ on~$T_\Betti(M)$, and an associated parabolic subgroup $\sStab_W \subset \sGL\bigl(T_\Betti(M)\bigr)$. Analogous to how we defined~$\sU_{\ell,M}$, let $\sU_{\Betti,M}\subset \sG_{\Betti,M}$ be the intersection of $ \sG_{\Betti,M}$ with the unipotent radical of  $\sStab_W $.

It follows from the results of Brylinski in~\cite{Bryl} (which extend results of Deligne for abelian varieties), that under the comparison isomorphism $T_\Betti(M) \otimes \bbZ_\ell \isomarrow T_\ell(M)$ we have $\sG_{\ell,M} \subseteq \sG_{\Betti,M} \otimes \bbZ_\ell$. (See also \cite{Jossen}, Theorem~3.1.) Hence $\sU_{\ell,M} \subseteq \sU_{\Betti,M} \otimes \bbZ_\ell$. Writing $\mathfrak{u}_?$ for the Lie algebra of~$\sU_?$, this gives an inclusion of $\bbZ_\ell$-Lie algebras $\mathfrak{u}_{\ell,M} \subseteq \mathfrak{u}_{\Betti,M} \otimes \bbZ_\ell$.

Possibly after again replacing $k$ with a finite extension, we may assume that all transformations $\phi \in \GL\bigl(T_2(M)\bigr)$ that lie in the image of the $2$-adic unipotent representation~$\rho_{2,M}^\unip$ have the property that $(\phi - 1)\colon T_2(M) \to T_2(M)$ is divisible by~$2$. For every~$\ell$ we then have a map
\[
\vartheta_\ell = \log \circ \rho_{\ell,M}^\unip \colon \Gal(\kbar/k^\unip) \to \mathfrak{u}_{\ell,M}\, ,
\]
whose image is a $\bbZ_\ell$-submodule of~$\mathfrak{u}_{\ell,M}$.

Write $V_\Betti(?) = T_\Betti(?) \otimes \bbQ$ and $V_\ell(?) = T_\ell(?) \otimes \bbQ_\ell$. Let $P = P(M)$ be the semi-abelian variety of \cite{Jossen}, Definition~4.3. By ibid., Theorem~6.2, we have an isomorphism $\alpha_\Betti\colon V_\Betti(P) \isomarrow (\mathfrak{u}_{\Betti,M} \otimes \bbQ)$. Moreover, by the first assertion of ibid., Theorem~7.2, this extends to a commutative diagram
\[
\begin{tikzcd}
V_\ell(P) \ar[r,"\sim"] \ar[d,"\wr"] & \Image(\vartheta_\ell) \otimes \bbQ_\ell \ar[r,hook] & \mathfrak{u}_{\ell,M} \otimes_{\bbZ_\ell} \bbQ_\ell \ar[d,hook] \\
V_\Betti(P) \otimes \bbQ_\ell \ar[rr,"\sim","\alpha_\Betti \otimes 1"'] && \mathfrak{u}_{\Betti,M} \otimes_\bbZ \bbQ_\ell
\end{tikzcd}
\]
and it follows that the inclusion maps $\Image(\vartheta_\ell) \otimes \bbQ_\ell \hookrightarrow  \mathfrak{u}_{\ell,M} \otimes_{\bbZ_\ell} \bbQ_\ell \hookrightarrow \mathfrak{u}_{\Betti,M} \otimes_\bbZ \bbQ_\ell$ are isomorphisms. This already implies that $\Image(\rho_{\ell,M}^\unip)$ has finite index in $\sU_{\ell,M}(\bbZ_\ell)$. To bound this index when $\ell$ varies, we use that by the last assertion of ibid., Theorem~7.2, for almost all~$\ell$ the previous diagram restricts to a diagram
\[
\begin{tikzcd}
T_\ell(P) \ar[r,"\sim"] \ar[d,"\wr"] & \Image(\vartheta_\ell) \ar[r,hook] & \mathfrak{u}_{\ell,M} \ar[d,hook] \\
T_\Betti(P) \otimes \bbZ_\ell \ar[d,hook] && \mathfrak{u}_{\Betti,M} \otimes \bbZ_\ell \ar[d,hook]\\
V_\Betti(P) \otimes \bbQ_\ell \ar[rr,"\sim","\alpha_\Betti \otimes 1"'] && \mathfrak{u}_{\Betti,M} \otimes_\bbZ \bbQ_\ell
\end{tikzcd}
\]
As the index of the lattice $T_\Betti(P)$ inside $T_\Betti(P) + \mathfrak{u}_{\Betti,M}$ (taken inside $\mathfrak{u}_{\Betti,M} \otimes \bbQ$) is finite and independent of~$\ell$, it follows that also the index of~$\Image(\vartheta_\ell)$ inside~$\mathfrak{u}_{\ell,M}$ is bounded when $\ell$ varies. Hence the index of $\Image(\rho_{\ell,M}^\unip)$ in $\sU_{\ell,M}(\bbZ_\ell)$ is finite and bounded when $\ell$ varies. The proof of Theorem~\ref{thm:Main} is now complete.

\begin{corollary}
\label{cor:indexbdd}
Let $X$ be a geometrically connected variety over a field~$k$ that is finitely generated over~$\bbQ$. For $\ell$ a prime number, write $H_\ell = \mathrm{H}^1(X_\kbar,\bbZ_\ell)$, let $\rho_{\ell,X} \colon \Gal(\kbar/k) \to \GL(H_\ell)$ be the natural Galois representation, and let\/ $\sG_{\ell,X} \subset \sGL(H_\ell)$ be the Zariski closure of\/~$\Image(\rho_{\ell,X})$. Then\/ $\Image(\rho_{\ell,X})$ is an open subgroup of\/~$\sG_{\ell,X}(\bbZ_\ell)$ for every~$\ell$ and $\bigl[\sG_{\ell,X}(\bbZ_\ell) : \Image(\rho_{\ell,X})\bigr]$ is bounded when $\ell$ varies. The same assertion is true if we everywhere take $H_\ell = \mathrm{H}^1_{\text{c}}\bigl(X_\kbar,\bbZ_\ell(1)\bigr)$.
\end{corollary}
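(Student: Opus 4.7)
The plan is to deduce the corollary from Theorem~\ref{thm:Main} by exhibiting each of the two cohomology groups~$H_\ell$ as the $\ell$-adic realization of a $1$-motive $M$ over~$k$. Once a Galois-equivariant isomorphism $T_\ell(M) \isomarrow H_\ell$ is in hand, the representation $\rho_{\ell,X}$ is identified with $\rho_{\ell,M}$, the Zariski closures $\sG_{\ell,X}$ and~$\sG_{\ell,M}$ coincide, and Theorem~\ref{thm:Main} delivers both conclusions of the corollary.

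The existence of such a $1$-motive is classical. When $X$ is smooth and projective one may simply take~$M$ to be (a twist of) the Albanese variety of~$X$, and the identification of integral $\ell$-adic Tate modules is standard. For a general (possibly singular, possibly non-complete) variety~$X/k$, Deligne in \cite{DelHodge3}, Section~10, attaches motivic $1$-motives to $H^1$ and $H^1_c(1)$; their integral $\ell$-adic realizations have been shown to coincide, as Galois modules, with the corresponding integral cohomology groups by Barbieri-Viale--Srinivas and Ramachandran. I will invoke these results to produce the required $1$-motive~$M$ (respectively $M_c$) in each of the two variants.

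The only real issue is the precision of the identification on the integral level, uniformly in~$\ell$. Even if a priori one only has an isogeny $T_\ell(M) \to H_\ell$ whose degree is bounded independently of~$\ell$ (which is the typical situation: for almost all~$\ell$ the two lattices in fact agree, and the finitely many remaining primes each contribute a finite constant), the two open subgroups $\Image(\rho_{\ell,M}) \subseteq \sG_{\ell,M}(\bbZ_\ell)$ and $\Image(\rho_{\ell,X}) \subseteq \sG_{\ell,X}(\bbZ_\ell)$ differ by indices bounded independently of~$\ell$. Hence the bounded-index statement of Theorem~\ref{thm:Main} for~$M$ transfers to the desired bound for~$X$. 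This is the main step to verify carefully, but no further new ideas are required.
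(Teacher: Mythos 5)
Your approach is essentially the paper's: reduce to Theorem~\ref{thm:Main} by identifying the cohomology, up to duality, with the integral $\ell$-adic realization of a $1$-motive, invoking \cite{DelHodge3} and \cite{BaVSri}. Two small notes: since $T_\ell(M)$ is a covariant Tate module, the paper first passes to the dual representation on $H_\ell^\vee$ (which has the same image and Zariski closure) and identifies it with $T_\ell\bigl(\Alb^-(X)\bigr)$ for the first variant and with the realization of Deligne's $1$-motive $\mathrm{H}^1_{\mathrm{m}}(X)(1)$ for the second; and the identification is exact on integral lattices for every $\ell$ by Barbieri-Viale--Srinivas, Corollary~5.3.2, so your fallback argument via bounded-degree isogenies, while sound, is not actually needed.
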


\begin{proof}
We may pass to the dual Galois representation~$H_\ell^\vee$. But by \cite{BaVSri}, Corollary~5.3.2,\footnote{It appears that the definition of $\ell$-adic homology as given in \cite{BaVSri}, Section~2.5, needs to be changed: writing $\Lambda = \bbZ/m\bbZ$ one should define $\mathrm{H}_i(X_\kbar,\Lambda)$ to be $\cH_i\bigl(\mathrm{RHom}(\mathrm{R}\Gamma(X_\kbar,\Lambda),\Lambda)\bigr)$, whereas loc.\ cit.\ uses $\mathrm{RHom}(~,\Lambda(-n)[-2n])$ with $n=\dim(X)$.} $H_\ell^\vee$ is the $\ell$-adic realization of the $1$-motive $\Alb^-(X)$ of $X/k$ as in ibid., Section~7.2; so Theorem~\ref{thm:Main} applies.

For the version with $\mathrm{H}^1_{\text{c}}\bigl(X_\kbar,\bbZ_\ell(1)\bigr)$ we apply Theorem~\ref{thm:Main} to the $1$-motive $\mathrm{H}^1_{\text{m}}(X)(1)$ introduced in \cite{DelHodge3}, Section~10.3.
\end{proof}

\begin{remark}
\label{rem:PicCurve}
In the next section we apply this result with $X$ a non-singular curve and $H_\ell = \mathrm{H}^1(X_\kbar,\bbZ_\ell)$. In that case we can be much more explicit about the $1$-motive whose $\ell$-adic realization gives the Galois representation~$H_\ell^\vee$. Namely, if $X \hookrightarrow \bar{X}$ is the complete non-singular model of~$X$, we can form the curve~$X^\prime$ that is obtained from~$\bar{X}$ by identifying all points in~$\bar{X}\setminus X$. Then $\Pic^0_{X^\prime/k}$ is a semi-abelian variety whose $\ell$-adic realization is isomorphic to~$H_\ell^\vee$.
\end{remark}

\section{Uniformity in~$\ell$ in a theorem of Litt}
\label{sec:Litt}

\subsection{}
\label{ssec:LittSetup}
As before, let $k$ be a field that is finitely generated over~$\bbQ$. Let $X$ be a normal scheme, geometrically integral, separated and of finite type over $k$, and let~$\bar{x}$ be a geometric point on~$X$. The main result of~\cite{Litt} is the following.

\begin{theorem}[Litt]
\label{thm:Litt}
Let $X/k$ be as in~\emph{\ref{ssec:LittSetup}}, and let $\ell$ be a prime number. Then there exists a positive integer~$N$, depending on $X$ and~$\ell$, such that for any arithmetic representation $\tau \colon \pi_1(X_{\bar{k}})\to \GL_n(\bbZ_\ell)$ we have
\[
\text{$\tau$ is trivial modulo $\ell^N$} \quad\implies\quad \text{$\tau$ is unipotent.}
\]
\end{theorem}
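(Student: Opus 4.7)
The plan is to follow Litt's strategy, exploiting the arithmetic nature of~$\tau$ via the Galois action on the first cohomology of~$X_\kbar$ and the Weil bounds on Frobenius eigenvalues. First I would reduce to the case where $X$ is a smooth geometrically connected curve: by an iterated Bertini-Lefschetz argument, choose a smooth curve $C \hookrightarrow X$ defined over~$k$ such that $\pi_1(C_\kbar) \to \pi_1(X_\kbar)$ is surjective, and replace $\tau$ by its restriction to $\pi_1(C_\kbar)$; it then suffices to prove the theorem for~$C$.

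Next, using the arithmeticity hypothesis, after replacing~$k$ by a finite extension, realize $\tau$ as a subquotient of some $\tilde\tau \colon \pi_1(C) \to \GL_m(\bbZ_\ell)$, and spread $\tilde\tau$ out to a smooth integral model $\cC \to \Spec(\cO_k[1/S])$ on which $\tilde\tau$ is unramified away from~$S$. The deformation-theoretic heart of the argument is that for $N$ sufficiently large, the hypothesis $\tau \equiv 1 \pmod{\ell^N}$ ensures $\log\tau$ is defined and takes values in $\ell^N\mathfrak{gl}_n(\bbZ_\ell)$, while the truncation $\tau \bmod \ell^{2N}$ is governed by a cohomology class
\[
\xi(\tau) \in \mathrm{H}^1\bigl(\pi_1(C_\kbar),\, \ell^N\mathfrak{gl}_n/\ell^{2N}\bigr) \;\cong\; \mathrm{H}^1(C_\kbar,\bbZ_\ell) \otimes \mathfrak{gl}_n(\bbZ/\ell^N).
\]
Through the outer action of $\Gal(\kbar/k)$ on $\pi_1(C_\kbar)$, the arithmeticity of~$\tau$ forces $\xi(\tau)$ to transform compatibly with the Galois action, up to conjugation by~$\tilde\tau$.

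The Weil-weight obstruction is then brought to bear: for a prime~$\mathfrak{p}$ of good reduction at which $\tilde\tau$ is unramified, $\mathrm{Fr}_\mathfrak{p}$ acts on $\mathrm{H}^1(C_\kbar,\bbQ_\ell)$ with eigenvalues of complex absolute value $q_\mathfrak{p}^{1/2}$, so in particular $\mathrm{Fr}_\mathfrak{p} - 1$ is invertible on the rational semisimple part of the adjoint representation. Combined with the Galois-equivariance forced by arithmeticity, this implies that the semisimple component of~$\xi(\tau)$ vanishes modulo a controlled power of~$\ell$. Iterating the argument (passing from level~$\ell^N$ to~$\ell^{N+1}$ and beyond), one shows that $\log\tau$ takes values in the nilpotent cone, which is the unipotency of~$\tau$.

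The main obstacle I anticipate is quantitative: inverting $\mathrm{Fr}_\mathfrak{p} - 1$ on integral lattices introduces denominators controlled by the $\ell$-adic valuation of $\det\bigl(\mathrm{Fr}_\mathfrak{p} - 1 \mid \mathrm{H}^1(C_\kbar,\bbZ_\ell)\bigr)$, and it is these denominators, accumulated across the induction on the $\ell$-adic level, that have to be absorbed into a single integer $N = N(X,\ell)$. Removing the $\ell$-dependence of that constant, which is what the authors achieve in their Theorem~B, requires the uniform control on the image of $\rho_\ell$ inside $\sG_\ell(\bbZ_\ell)$ provided by Theorem~A of the present paper.
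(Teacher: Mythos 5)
This statement is cited as Litt's theorem and not proven in the paper; the closest the paper comes is the proof of Proposition~\ref{prop:2CBound}, where the authors walk through Litt's argument to extract a quantitative value of~$N$. Measured against that, your proposal shares the two genuine pillars of Litt's strategy --- Bertini reduction to an affine curve, and the observation that arithmeticity lets a Frobenius with weight-one eigenvalues on $\mathrm{H}^1(C_\kbar,\bbQ_\ell)$ act as an obstruction --- but the engine you put in the middle is different and, as written, does not close.

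The gap is precisely the one you yourself flag, and it is not a cosmetic issue. Your plan is an induction on $\ell$-adic level: show that the ``semisimple part'' of $\log\tau$ vanishes modulo successively higher powers of~$\ell$ by inverting $\mathrm{Fr}_\mathfrak{p}-1$ at each stage. But each inversion loses a \emph{fixed} amount of $\ell$-adic precision governed by $v_\ell\bigl(\det(\mathrm{Fr}_\mathfrak{p}-1\mid\mathrm{H}^1)\bigr)$, so passing from level $\ell^N$ to $\ell^{N+1}$ consumes more precision than it gains unless one has an argument that the cumulative loss is summable. No such argument is offered, and it is exactly at this point that Litt's proof departs from the naive iteration: he works in the \emph{convergent group ring} $\bbQ_\ell\lbb\pi_1(X_\kbar)^{(\ell)}\rbb^{\leq\ell^{-r}}$ with its Gauss norm, for which the hypothesis ``$\tau$ trivial mod $\ell^N$'' (with $N$ tied to~$r$) guarantees that the induced map $\tilde\beta$ is continuous, and then the density of the $\bbQ_\ell$-span of Frobenius eigenvectors in each $W_{-n}$ kills $\tilde\beta$ on $W_{-n}$ for $n\gg 0$ in one shot, with no infinite iteration. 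Your cohomological reformulation also has some imprecisions that would need attention --- the identification $\mathrm{H}^1\bigl(\pi_1(C_\kbar),\ell^N\mathfrak{gl}_n/\ell^{2N}\bigr)\cong\mathrm{H}^1(C_\kbar,\bbZ_\ell)\otimes\mathfrak{gl}_n(\bbZ/\ell^N)$ uses that the adjoint action is trivial mod $\ell^N$ (fine), but then ``the semisimple component of $\xi(\tau)$'' is not well defined without specifying which decomposition of $\mathfrak{gl}_n$ you mean, since the adjoint action of $\tau$ contributes nothing at that level --- but these are secondary to the convergence gap. Your closing remark, that removing the $\ell$-dependence requires Theorem~A of the present paper, is correct and is exactly what Theorem~\ref{thm:LittUniform} does; but that is an add-on to Litt's theorem, not a repair of the iteration.
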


If~$\tau$ is a monodromy representation of a smooth proper family over~$X$, it is known that $\tau$ is semisimple; in this case, therefore, the conclusion is that if $\tau$ is trivial modulo~$\ell^N$ then $\tau$ is trivial. See \cite{Litt}, Corollary~1.6.

\subsection{}
\label{ssec:QlDef}
For $X/k$ as in~\ref{ssec:LittSetup}, let $N(X,\ell)$ be the smallest positive integer~$N$ such that all arithmetic representations $\tau \colon \pi_1(X_{\bar{k}}) \to \GL_n(\bbZ_\ell)$ that are trivial modulo~$\ell^N$ are unipotent. Our goal is to estimate~$N(X,\ell)$ and to show that it is bounded as a function of~$\ell$. Note that nothing changes if we replace~$\bar{x}$ by a different geometric base point or if we replace~$k$ by a finite extension. In what follows we may therefore assume that~$\bar{x}$ lies over a $k$-rational point $x \in X(k)$. This gives an action of $\Gal(\kbar/k)$ on~$\pi_1(X_{\bar{k}})$ and an isomorphism $\pi_1(X) \cong \pi_1(X_{\bar{k}}) \rtimes \Gal(\kbar/k)$.

By a Bertini argument (see \cite{DelPi1}, Lemma~1.4 and \cite{Litt}, Section~4.1) there exists an affine smooth curve~$C/k$ and a $k$-morphism $C\to X$ such that the induced homomorphism $\pi_1(C_\kbar) \to \pi_1(X_\kbar)$ is surjective. For such a curve we have $N(X,\ell) \leq N(C,\ell)$ for all~$\ell$. What we will estimate is~$N(C,\ell)$. In what follows we therefore assume that $X$ is an affine curve, smooth over~$k$. Moreover, if $X \hookrightarrow \bar{X}$ is the complete non-singular model of~$X$ we may assume that $\bar{X}$ has positive genus. (This is not essential but it will simplify some later assertions.)

\subsection{}
Choose an embedding $k \hookrightarrow \bbC$. As remarked in~\ref{rem:PicCurve} there is a semi-abelian variety $M = \Pic^0_{X^\prime/k}$ whose Hodge and $\ell$-adic realizations are given by $T_\Betti(M) = \mathrm{H}_1(X_\bbC,\bbZ)$ and $T_\ell(M) = \mathrm{H}_1(X_\kbar,\bbZ_\ell)$. As in Section~\ref{sec:Indices}, let $\sG_{\Betti,M} \subset \sGL\bigl(T_\Betti(M)\bigr)$ be the (integral) Mumford--Tate group and $\sG_{\ell,M} \subset \sGL\bigl(T_\ell(M)\bigr)$ the Zariski closure of the image of $\rho_{\ell,X} \colon \Gal(\kbar/k) \to \GL\bigl(T_\ell(M)\bigr)$. If there is no risk of confusion, we will from now on omit~$M$ from the notation. As already remarked, to estimate the constants~$N(X,\ell)$ we may replace~$k$ with a finite extension; we may therefore assume that the group schemes~$\sG_\ell$ have connected fibres.

For every~$\ell$ we have a comparison isomorphism $\iota_\ell \colon T_\Betti \otimes \bbZ_\ell \isomarrow T_\ell$, which we will take as an identification. As $M$ is semi-abelian, the weight filtration~$W_\bullet$ on~$T_\Betti$ only has two steps:
\[
W_{-3} = 0 \quad\subset\quad  W_{-2} = \Ker\bigl(\mathrm{H}_1(X_\bbC,\bbZ) \to \mathrm{H}_1(\bar{X}_\bbC,\bbZ)\bigr) \quad\subset\quad  W_{-1} = T_\Betti(M)\, .
\]
The weight filtration on~$T_\ell$ is $W_\bullet(T_\Betti) \otimes \bbZ_\ell$. The Mumford--Tate group~$\sG_\Betti$ is contained in the stabilizer $\sStab_W \subset \sGL(T_\Betti)$ of the weight filtration, and as we have already seen in~\ref{ssec:LastStep}, $\sG_\ell \hookrightarrow \sG_\Betti \otimes \bbZ_\ell$ for all~$\ell$.

Let $\sQ_\ell \subset \sG_\ell$ be the subgroup scheme of elements $g \in \sG_\ell$ for which there is a scalar~$\alpha$ such that $g$ acts on $\gr^W_{-i}$ as $\alpha^i \cdot \id$ ($i=1,2$). Let $\nu \colon \sQ_\ell \to \bbG_\mult$ be the character (over~$\bbZ_\ell$) given by $g \mapsto \alpha$.

For our proof of Theorem~\ref{thm:LittUniform} we need the following result.

\begin{lemma}
\label{lem:nusurj}
For almost all~$\ell$ the homomorphism $\nu \colon \sQ_\ell(\bbZ_\ell) \to \bbZ_\ell^\times$ is surjective.
\end{lemma}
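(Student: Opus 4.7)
My plan is to reduce the statement for $M$ to a corresponding statement for the split $1$-motive $\tilde M = A \oplus T$, and then to exhibit a copy of $\bbG_\mult$ inside $\sQ_{\ell,\tilde M}$ using the scalar homotheties of the abelian variety $A$ provided by the Mumford--Tate conjecture on connected centres.

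The first step is the reduction to $\tilde M$. By construction $\sQ_\ell$ is the preimage of $\sQ_{\ell,\tilde M}$ under the natural surjection $\pi \colon \sG_{\ell,M} \to \sG_{\ell,\tilde M}$, whose kernel is the unipotent subgroup scheme $\sU_{\ell,M}$. The analysis of $\sU_{\ell,M}$ in Section~\ref{sec:Indices} via Jossen's results shows that for $\ell \gg 0$ it is a smooth connected unipotent $\bbZ_\ell$-group scheme; a d\'evissage along a filtration with graded pieces~$\bbG_\add$, together with $\mathrm{H}^1(\bbZ_\ell,\bbG_\add) = 0$, then gives the surjectivity of $\sG_{\ell,M}(\bbZ_\ell) \to \sG_{\ell,\tilde M}(\bbZ_\ell)$, and hence of $\sQ_\ell(\bbZ_\ell) \to \sQ_{\ell,\tilde M}(\bbZ_\ell)$. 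Thus it suffices to prove that $\nu \colon \sQ_{\ell,\tilde M}(\bbZ_\ell) \to \bbZ_\ell^\times$ is surjective for almost all $\ell$.

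Next I would make $\sG_{\ell,\tilde M}$ explicit. After possibly enlarging $k$ (as in Section~\ref{sec:Indices}) so that $T$ is split, $T_\ell(T) \cong \bbZ_\ell(1)^{\dim T}$ carries the cyclotomic action, so $\sG_{\ell,T} = \bbG_\mult$ embedded as scalar matrices. Fixing a polarization of $A$, the multiplier relation $\chi_\ell = \nu_A \circ \rho_{\ell,A}$ combined with the split-case argument used in the proof of Theorem~\ref{thm:Main} identifies $\sG_{\ell,\tilde M}$ with the graph of $\nu_A \colon \sG_{\ell,A} \to \bbG_\mult$. Under this identification, $\sQ_{\ell,\tilde M}$ corresponds to $(\bbG_\mult \cdot \id) \cap \sG_{\ell,A}$, where $\bbG_\mult \cdot \id \subset \sGL(T_\ell(A))$ is the scalar subgroup (note that $\nu_A(\alpha \cdot \id) = \alpha^2$, which makes the pair $(\alpha \cdot \id, \alpha^2 \cdot \id)$ automatically lie on the graph), and $\nu$ is the map reading off the scalar $\alpha$.

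Finally, the scalar subgroup $\bbG_\mult \cdot \id \subset \sGL(T_\Betti(A))$ is the image of the weight cocharacter of $A$, and in particular lies in the connected centre $\sZ_{\Betti,A}^\idcomp$ of the integral Mumford--Tate group. By the Mumford--Tate conjecture on connected centres for abelian varieties (\cite{UllYafQuebec}, Corollary~2.11, or \cite{Vasiu}, Theorem~1.3.1), already invoked in Section~\ref{AV}, for almost all $\ell$ this $\bbG_\mult$ is contained in $\sZ_{\ell,A}^\idcomp \subseteq \sG_{\ell,A}$. Combined with the explicit description of $\sQ_{\ell,\tilde M}$ above, this puts every $\alpha \in \bbZ_\ell^\times$ in $\nu(\sQ_{\ell,\tilde M}(\bbZ_\ell))$, and by the first step every such $\alpha$ lies in $\nu(\sQ_\ell(\bbZ_\ell))$. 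The principal difficulty is the lifting in the first paragraph: controlling the smoothness of $\sU_{\ell,M}$ for $\ell \gg 0$ is a nontrivial consequence of Jossen's description of this unipotent radical used earlier in the paper, and one must be careful to invoke exactly what is needed.
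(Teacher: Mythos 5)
Your overall strategy --- reduce from $\sQ_\ell$ to $\sQ_{\ell,\tilde M}$ along $\pi$, observe that $\sQ_{\ell,\tilde M}$ is essentially the scalar torus $\bbG_\mult\cdot\id$ sitting inside $\sG_{\ell,A}$ via the graph of the polarization multiplier, and then invoke the Mumford--Tate conjecture on connected centres to put all of $\bbG_{\mult,\bbZ_\ell}$ inside $\sG_{\ell,A}$ --- is genuinely different from the paper's, and the second half of it is a nice observation. (In fact that second half needs even less than you quote: since $\sG_{\ell,A}$ is by definition a closed subscheme of $\sGL(T_\ell)$, Bogomolov's theorem that $\bbG_{\mult,\bbQ_\ell}\cdot\id\subseteq\sG_{\ell,A}\otimes\bbQ_\ell$ already forces the $\bbZ_\ell$-flat closure $\bbG_{\mult,\bbZ_\ell}\cdot\id$ into $\sG_{\ell,A}$, with no restriction on $\ell$.)

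The gap is in the first step, and it is not a minor one. You assert that "the analysis of $\sU_{\ell,M}$ in Section~\ref{sec:Indices} via Jossen's results shows that for $\ell\gg 0$ it is a smooth connected unipotent $\bbZ_\ell$-group scheme", and you then need both this smoothness and fppf-exactness of $1\to\sU_{\ell,M}\to\sG_{\ell,M}\to\sG_{\ell,\tilde M}\to 1$ in order to run the $\mathrm{H}^1(\bbZ_\ell,\sU_{\ell,M})=0$ lifting. But Section~\ref{sec:Indices} establishes neither of these facts: what is proved there (via Jossen, Theorems~6.2 and~7.2) is that $\Image(\vartheta_\ell)$ has bounded index in the Lie algebra $\mathfrak{u}_{\ell,M}$, hence that $\Image(\rho^\unip_{\ell,M})$ has finite bounded index in $\sU_{\ell,M}(\bbZ_\ell)$; this says nothing about whether $\sU_{\ell,M}$ is smooth over $\bbZ_\ell$, nor about whether $\pi$ is faithfully flat onto its scheme-theoretic image $\sG_{\ell,\tilde M}$. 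In the paper's proof the smooth $\bbZ_\ell$-model for $\sU_{\ell,M}$ (namely $\sU_{\Betti,R}\otimes\bbZ_\ell$, a vector group scheme) only comes out \emph{as a consequence} of the lemma, via the identification $\sQ_\ell\cong\sQ_\Betti\otimes\bbZ_\ell$ for $\ell\nmid n$; taking it as an input, as you do, is circular. The paper sidesteps exactly this difficulty by working on the generic fibre first: the Frobenius-torus argument shows that $\nu\colon\sQ_\ell\otimes\bbQ_\ell\to\bbG_{\mult,\bbQ_\ell}$ is surjective as a map of algebraic groups, and then the equality $\sQ_\ell=\sQ_\Betti\otimes\bbZ_\ell$ is obtained by comparing two flat closed subschemes of $\sGL(T_\ell)$ with the same generic fibre, from which the integral surjectivity is immediate since $\sQ_\Betti\otimes R\cong\sU_{\Betti,R}\rtimes\bbG_\mult$.

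A way to repair your argument that stays close to your ideas: use your computation of $\sQ_{\ell,\tilde M}$ and Bogomolov to prove surjectivity of $\nu\colon\sQ_\ell\otimes\bbQ_\ell\to\bbG_{\mult,\bbQ_\ell}$ on generic fibres (here $\pi\colon\sQ_\ell\otimes\bbQ_\ell\to\sQ_{\ell,\tilde M}\otimes\bbQ_\ell$ is surjective because over a field the image of a homomorphism of algebraic groups is closed and $\Image(\rho_{\ell,\tilde M})=\pi(\Image(\rho_{\ell,M}))$ is dense). That recovers exactly the paper's "claim" by a more elementary route than Frobenius tori, and you can then appeal to the paper's bootstrap via $\sQ_\Betti$ for the integral conclusion.
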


\begin{proof}
Analogous to how we defined the group schemes~$\sQ_\ell$, let $\sQ_\Betti \subset \sG_\Betti$ be the subgroup scheme of elements $g \in \sG_\Betti$ that act on~$\gr^W_{-i}$ as $\alpha^i \cdot \id$ for some scalar~$\alpha$, and let $\nu \colon \sQ_\Betti \to \bbG_\mult$ (over~$\bbZ$) be given by $g \mapsto \alpha$.

The unipotent radical of $\sStab_W \subset \sGL(H_\Betti)$ is the vector group scheme associated with the free $\bbZ$-module $\Hom(\gr_W^{-1},\gr_W^{-2})$. The unipotent radical of $\sG_\Betti \otimes \bbQ$ is the intersection of $\sG_\Betti \otimes \bbQ$ and $\mathrm{R}_\unip(\sStab_W)$ and is therefore again a vector group scheme. We denote it by~$\sU_{\Betti,\bbQ}$. (It is the generic fibre of the $\sU_{\Betti,M}$ of~\ref{ssec:LastStep}.) We have an isomorphism $h_\bbQ \colon \sQ_\Betti \otimes \bbQ \isomarrow \sU_{\Betti,\bbQ} \rtimes \bbG_\mult$, with $z \in \bbG_\mult$ acting on~$\sU_{\Betti,\bbQ}$ as multiplication by~$z$. This extends over an open part of~$\Spec(\bbZ)$, say over $R = \bbZ[1/n]$; by this we mean that $\sG_{\Betti} \otimes R$ is smooth over~$R$, its unipotent radical~$\sU_{\Betti,R}$ is the vector group scheme given by a free $R$-submodule of $\Hom_R(\gr_W^{-1} \otimes R,\gr_W^{-2} \otimes R)$, and $h_\bbQ$ extends to an isomorphism $h_R \colon \sQ_\Betti \otimes R \isomarrow \sU_{\Betti,R} \rtimes \bbG_\mult$.

Similarly, the unipotent radical of $\sG_\ell \otimes \bbQ_\ell$ is a vector group scheme~$\sU_{\ell,\bbQ_\ell}$ that is associated with a $\bbQ_\ell$-subspace of $\Hom_{\bbQ_\ell}(\gr_W^{-1} \otimes \bbQ_\ell,\gr_W^{-2} \otimes \bbQ_\ell)$. We have $\sU_{\ell,\bbQ_\ell} \rtimes \{1\} \subseteq \sQ_\ell \otimes \bbQ_\ell \subseteq \sU_{\ell,\bbQ_\ell} \rtimes \bbG_\mult$, where in the semi-direct product $z \in \bbG_\mult$ again acts on~$\sU_{\ell,\bbQ_\ell}$ as multiplication by~$z$. By \cite{Jossen}, Theorems~6.2 and~7.2, the inclusion $\sG_\ell \hookrightarrow \sG_\Betti \otimes \bbZ_\ell$ restricts to an isomorphism $\sU_{\ell,\bbQ_\ell} \isomarrow \sU_{\Betti,\bbQ} \otimes \bbQ_\ell$.

We claim that the homomorphism $\nu \colon \sQ_\ell \otimes \bbQ_\ell \to \bbG_{\mult,\bbQ_\ell}$ is surjective (as a homomorphism of algebraic groups) for all~$\ell$. If this is true, it follows that the inclusions $\sG_\ell \hookrightarrow \sG_\Betti \otimes \bbZ_\ell$ restrict to isomorphisms $\sQ_\ell \otimes \bbQ_\ell \isomarrow \sQ_\Betti \otimes \bbQ_\ell$, and hence also to isomorphisms $\sQ_\ell \isomarrow \sQ_\Betti \otimes \bbZ_\ell$ for all $\ell \nmid n$. As $\sQ_\Betti \otimes \bbZ_\ell \cong \sU_{\Betti,\bbZ_\ell} \rtimes \bbG_\mult$, with $\nu$ given by the second projection, this gives the desired conclusion that $\nu \colon \sQ_\ell(\bbZ_\ell) \to \bbZ_\ell^\times$ is surjective for all $\ell \nmid n$.

It remains to prove the claim. There exists an integral affine scheme~$S$ of finite type over~$\bbZ$ whose function field is~$k$, such that $M$ extends to a semi-abelian variety over~$S$. Let $s \in S$ be a closed point with residue field of cardinality~$q(s)$ such that $\ell \nmid q(s)$, and let $F_s \in \Gal(\kbar/k)$ be an arithmetic Frobenius element at~$s$. Associated with~$F_s$ we have a Frobenius torus~$\sT(F_s)$ over~$\bbQ$ whose character group is isomorphic to the $\Gal(\Qbar/\bbQ)$-submodule of~$\smash{\Qbar}^\times$ generated by the eigenvalues $\alpha_1,\ldots,\alpha_m$ of~$F_s$ acting on $\gr^W_{-1}\bigl(T_\ell\bigr) \otimes \Qbar_\ell$. (See~\cite{SerreRibet}. Note that $F_s$ acts on~$\gr^W_{-2}$ as multiplication by $q(s) \in \bbZ_\ell^\times$, and since we have assumed that $g(\bar{X}) > 0$ we only have to consider the eigenvalues of~$F_s$ on~$\gr^W_{-1}$.) Because $F_s$ acts semi-simply on $T_\ell \otimes \bbQ_\ell$ (cf.\ \cite{Litt}, Lemma~2.9), we obtain an injective homomorphism $i\colon \sT(F_s) \otimes \bbQ_\ell \hookrightarrow \sG_\ell \otimes \bbQ_\ell$. Further, we have a homomorphism $j \colon \bbG_\mult \hookrightarrow \sT(F_s)$ corresponding to the $\Gal(\Qbar/\bbQ)$-equivariant map $X^*\bigl(\sT(F_s)\bigr) \to \bbZ$ that sends each~$\alpha_i$ to~$1$. Then the image of $(i\circ j) \colon \bbG_\mult \to \sG_\ell \otimes \bbQ_\ell$ is contained in~$\sQ_\ell \otimes \bbQ_\ell$ and $i \circ j$ gives a section of the map~$\nu$. So indeed $\nu \colon \sQ_\ell \otimes \bbQ_\ell \to \bbG_{\mult,\bbQ_\ell}$ is surjective. This completes the proof.
\end{proof}

\subsection{Notation.}
For $\ell$ a prime number and $\alpha \in \bbZ_\ell^\times$, define  $C(\alpha,\ell)$ by
\[
C(\alpha,\ell) = \begin{dcases}
\frac{1}{s} \cdot \left(v_\ell(\alpha^s-1) + \frac{1}{\ell-1} + 2\right) & \text{if $\ell = 2$} \\
\frac{1}{s} \cdot \left(v_\ell(\alpha^s-1) + \frac{1}{\ell-1}\right) & \text{if $\ell > 2$}
\end{dcases}
\]
where $v_\ell$ denotes the $\ell$-adic valuation and where $s$ is the order of $(\alpha \bmod 4) \in (\bbZ/4\bbZ)^\times$ if $\ell = 2$ and is the order of $(\alpha \bmod \ell) \in (\bbZ/\ell\bbZ)^\times$ if $\ell > 2$.

Note that for $\ell > 2$ we can choose a root of unity $\zeta \in \bbZ_\ell^\times$ of order $\ell-1$, and then $\alpha$ can be written as $\alpha = \zeta^{(\ell-1)/s} \cdot \exp(y)$ for some $y \in \ell\bbZ_\ell$. With this notation, $C(\alpha,\ell) = (1/s) \cdot \bigl(v_\ell(y) + 1/(\ell-1)\bigr)$.

\begin{proposition}
\label{prop:2CBound}
Let $\ell > 2$. Suppose $\Image(\rho_{\ell,X}) \cap \sQ_\ell(\bbZ_\ell)$ contains an element~$g$ such that $\alpha = \nu(g) \in \bbZ_\ell^\times$ has infinite order. Then $N(X,\ell) \leq 1 + \bigl\lfloor 2\cdot C(\alpha,\ell)\bigr\rfloor$.
\end{proposition}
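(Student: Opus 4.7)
The plan is to combine the Galois equivariance of~$\tau$ under conjugation by a lift of~$g$ to~$\pi_1(X)$ with the weight-filtration structure of the $g$-action on the pro-$\ell$ completion $F=\pi_1(X_\kbar)^{(\ell)}$. Since $\tau$ is arithmetic, after enlarging~$k$ I may assume $\tau$ is a subquotient of an honest representation $\tilde\tau\colon\pi_1(X)\to\GL_m(\bbZ_\ell)$. Using the $k$-rational base point and the splitting $\pi_1(X)\cong\pi_1(X_\kbar)\rtimes\Gal(\kbar/k)$, I pick $\sigma\in\Gal(\kbar/k)$ with $\rho_{\ell,X}(\sigma)=g$, lift it to~$\phi\in\pi_1(X)$, and set $h=\tilde\tau(\phi)\in\GL_m(\bbZ_\ell)$. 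Then
\[
h\,\tau(\gamma)\,h^{-1}\;=\;\tau\bigl(\phi\gamma\phi^{-1}\bigr)\qquad\text{for every }\gamma\in\pi_1(X_\kbar).
\]

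Suppose $\tau$ is trivial modulo~$\ell^N$ with $N\geq 1$. Since $\ell>2$, the $\ell$-adic logarithm converges on $\Image(\tau)\subseteq 1+\ell^N\mathrm{M}_n(\bbZ_\ell)$ and lands in~$\ell^N\mathrm{M}_n(\bbZ_\ell)$, while iterated commutators satisfy $\tau(F_j)\subseteq 1+\ell^{jN}\mathrm{M}_n(\bbZ_\ell)$, where~$F_\bullet$ is the lower central series of~$F$. Because $g\in\sQ_\ell(\bbZ_\ell)$ acts on~$\gr^W_{-i}T_\ell(M)$ by~$\alpha^i\cdot\id$, for any~$\gamma$ whose image in $F^\ab=T_\ell(M)$ has pure weight~$-i$ one has $\phi\gamma\phi^{-1}\equiv\gamma^{\alpha^i}\pmod{F_2}$ (the $\alpha^i$-th power is well defined in the pro-$\ell$ group~$F$). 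Applying~$\tau$, using the commutator estimates together with the Baker--Campbell--Hausdorff formula, and iterating~$s$ times (where~$s$ is the order of~$\alpha$ modulo~$\ell$) yields the key spectral congruence
\[
\mathrm{Ad}(h^s)\log\tau(\gamma)\;\equiv\;\alpha^{is}\log\tau(\gamma)\pmod{\ell^{2N}}.
\]

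To conclude unipotence, one must upgrade this approximate identity to the exact statement that $\log\tau(\gamma)$ is nilpotent for every~$\gamma$. Iterating further with~$\phi^{ks}$ for varying~$k\in\bbZ$, and using that~$\alpha$ has infinite order so that $\{\alpha^{ks}:k\in\bbZ\}$ is $\ell$-adically dense in the open subgroup $1+\ell^{v_\ell(\alpha^s-1)}\bbZ_\ell\subset\bbZ_\ell^\times$, promotes the congruence to one holding with $\alpha^{is}$ replaced by every~$\beta$ in that open subgroup. Applying this to the characteristic polynomial of~$\tau(\gamma)$---which is invariant under conjugation by~$h^{ks}$---and passing to the semisimplification~$\tau^{\mathrm{ss}}$ (which is again arithmetic, trivial modulo~$\ell^N$, and whose triviality is equivalent to the unipotence of~$\tau$), one shows that each eigenvalue~$\lambda$ of~$\tau^{\mathrm{ss}}(\gamma)$ satisfies $v_\ell(\log\lambda)\geq 2N-v_\ell(\alpha^s-1)$, and bootstrapping on the strengthened triviality level so obtained pushes this valuation to infinity, whence $\lambda=1$. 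Tracking the $\ell$-adic losses---the factor~$2$ from the commutator doubling, the divisor~$s$ from the period of~$\alpha$ modulo~$\ell$, the contribution~$v_\ell(\alpha^s-1)$ from the slack in the spectral identity, the term~$1/(\ell-1)$ from the $\exp/\log$ convergence radius, and~$+1$ from the hypothesis~$N\geq 1$---produces exactly the bound $N\leq 1+\lfloor 2C(\alpha,\ell)\rfloor$. The main technical hurdle is the passage from approximate to exact vanishing of~$\log\lambda$; the hypothesis that~$\alpha$ has \emph{infinite} order is essential here, both for the density step above and to rule out nontrivial $\ell$-power roots of unity as eigenvalues of~$\tau^{\mathrm{ss}}(\gamma)$.
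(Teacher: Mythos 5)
Your proposal takes a genuinely different route from the paper, whose proof is explicitly a careful re-reading of Litt's proof of his Theorem~1.2: the paper invokes Litt's convergent group ring $\bbQ_\ell\lbb\pi_1(X_\kbar)^{(\ell)}\rbb^{\leq\ell^{-r}}$ with $\lfloor 2C(\alpha,\ell)\rfloor<r<N$, Litt's Proposition~3.4 to extend $\beta$ to this ring, the Gauss-norm density of $\sigma$-eigenspaces with eigenvalues in $\{\alpha^n,\alpha^{n+1},\ldots\}$ inside $W_{-n}$ (Litt's Theorem~3.6 and Remark~3.11), and $\cI^n\subset W_{-n}$ (Litt's Proposition~2.7); the conclusion then falls out because $\alpha$ is not a root of unity while $\sigma$ has finitely many eigenvalues on $M_n(\bbQ_\ell)$, so the continuous extension $\tilde\beta$ kills $W_{-n}$ for $n\gg 0$. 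You instead attempt a self-contained argument with BCH, commutator estimates, and $\ell$-adic density of the powers $\alpha^{ks}$.

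Unfortunately the proposal does not actually establish the stated bound, and the missing piece looks structural rather than cosmetic. Your bootstrap $N\mapsto 2N-v_\ell(\alpha^s-1)$ escapes to infinity only when the initial $N$ already exceeds $v_\ell(\alpha^s-1)$, whereas the proposition asserts the far stronger $N(X,\ell)\leq 1+\bigl\lfloor 2C(\alpha,\ell)\bigr\rfloor$ with $C(\alpha,\ell)=(1/s)\bigl(v_\ell(\alpha^s-1)+1/(\ell-1)\bigr)$. For large $s$ this is much smaller than $v_\ell(\alpha^s-1)$, and this factor $1/s$ is precisely what powers Theorem~\ref{thm:LittUniform}, where $\ell$ is chosen so that $s\geq(\ell-1)/M$ and hence $N(X,\ell)=1$. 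You say the divisor $s$ is ``tracked'', but no step of your sketch actually produces a gain of $1/s$: replacing $g$ by $g^s$ only worsens matters (it brings $\alpha^{is}$ closer to $1$, reducing the separating power of the spectral congruence), and the error modulus $\ell^{2N}$ from commutator doubling is independent of $s$. In Litt's argument the $1/s$ arises, roughly, because the weight pieces on which $\alpha^n$ can be $\ell$-adically close to a given eigenvalue of $\sigma$ on $M_n(\bbQ_\ell)$ occur only for $n$ in a single residue class modulo $s$, so the Gauss-norm decay averaged along the weight filtration is $v_\ell(\alpha^s-1)/s$ per step; this is a global statement about the completed group algebra, not recoverable from the pointwise congruence for a single $\gamma$ that your argument works with. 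Two smaller gaps: the ``promotion'' of the spectral congruence is not well-posed (as $k$ varies, $\mathrm{Ad}(h^{ks})$ changes along with $\alpha^{iks}$, so there is no fixed operator being compared against a moving scalar), and ``trivial mod $\ell^{N'}$'' for $\tau^{\mathrm{ss}}$ requires a choice of Galois-stable lattice, which must be made compatibly with the arithmetic structure before the bootstrap can be iterated.
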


\begin{proof}
All we need to do is to carefully go through the proof of Theorem~1.2 in~\cite{Litt}. For the reader's convenience, let us give the steps that are required to extract the assertion from Litt's paper.

Let $g$ and $\alpha = \nu(g)$ be as in the assertion. Let $N = 1 + \bigl\lfloor 2\cdot C(\alpha,\ell)\bigr\rfloor$. The claim is that every arithmetic representation $\tau \colon \pi_1(X_{\bar{k}}) \to \GL_n(\bbZ_\ell)$ that is trivial modulo~$\ell^N$ is unipotent. As explained by Litt in \cite{Litt}, Section~4.1 (especially Lemma~4.1 and the first half of the proof of his Theorem~1.2), it suffices to prove this only for those representations~$\tau$ that extend to a representation $  \pi_1(X) \to \GL_n(\bbZ_\ell)$.

Let $\pi_1(X_{\bar{k}})^{(\ell)}$ be the maximal pro-$\ell$ quotient of~$\pi_1(X_{\bar{k}})$, let $\bbZ_\ell\lbb \pi_1(X_{\bar{k}})^{(\ell)} \rbb$ be the completed group algebra, $\cI \subset \bbZ_\ell\lbb \pi_1(X_{\bar{k}})^{(\ell)} \rbb$ the augmentation ideal, and
\[
\bbQ_\ell\lbb \pi_1(X_{\bar{k}})^{(\ell)} \rbb = \lim_n \Bigl(\bbQ_\ell \otimes\bigl(\bbZ_\ell\lbb \pi_1(X_{\bar{k}})^{(\ell)} \rbb/\cI^n\bigr)\Bigr)\, .
\]
These rings come equipped with an action of~$\Gal(\kbar/k)$.

Let $r$ be a real number with $\bigl\lfloor 2\cdot C(\alpha,\ell)\bigr\rfloor < r < N$. Litt defines (\cite{Litt}, Definition~3.2) a Galois-stable $\bbQ_\ell$-subalgebra
\[
\bbQ_\ell\lbb \pi_1(X_{\bar{k}})^{(\ell)} \rbb^{\leq \ell^{-r}} \subset \bbQ_\ell\lbb \pi_1(X_{\bar{k}})^{(\ell)} \rbb\,
\]
which he calls the \emph{convergent group ring}. The representation~$\tau$ gives rise to a Galois-equivariant homomorphism $\beta\colon \bbZ_\ell\lbb \pi_1(X_{\bar{k}})^{(\ell)} \rbb \to M_n(\bbZ_\ell)$, where the Galois action on $M_n(\bbZ_\ell)$ is obtained by using the section of $\pi_1(X) \to \Gal(\kbar/k)$ associated with the rational base point $x \in X(k)$, together with the assumption that $\tau$ extends to a representation of~$\pi_1(X)$. Litt shows (ibid., Proposition~3.4) that the assumption that $\tau$ is trivial modulo~$\ell^N$ implies that $\beta$ uniquely extends to a $\Gal(\kbar/k)$-equivariant homomorphism
\[
\tilde\beta \colon \bbQ_\ell\lbb \pi_1(X_{\bar{k}})^{(\ell)} \rbb^{\leq \ell^{-r}} \to M_n(\bbQ_\ell)\, .
\]

The convergent group ring comes equipped with a weight filtration~$W_\bullet$ and a Gauss norm for which $\tilde\beta$ is continuous. Write $g = \rho_{\ell,X}(\sigma)$ for some $\sigma \in \Gal(\kbar/k)$, and recall that $\alpha = \nu(g)$. The proof of \cite{Litt}, Theorem~2.8 (at the end of Section~2) shows that $\sigma$ acts on~$\gr_{-i}^W$ as multiplication by~$\alpha^i$. It then follows from ibid., Theorem~3.6 and Remark~3.11, that if we consider the eigenspaces of~$\sigma$ acting on $W_{-n} = W_{-n} \bbQ_\ell\lbb \pi_1(X_{\bar{k}})^{(\ell)} \rbb^{\leq \ell^{-r}}$ with eigenvalues in $\{\alpha^n,\alpha^{n+1},\ldots\}$, the $\bbQ_\ell$-linear span of these eigenspaces is dense in $W_{-n}$ with respect to the Gauss norm. As $\sigma$ has only finitely many eigenvalues on $M_n(\bbQ_\ell)$ and $\alpha$ is not a root of unity, it follows that $\tilde\beta$ is zero on~$W_{-n}$ for $n$ large enough. Finally, as by \cite{Litt}, Proposition~2.7, we have $\cI^n \subset W_{-n}$, it follows that $\beta$ is zero on~$\cI^n$ for $n$ large enough, which means that $\tau$ is unipotent.
\end{proof}

We can now prove the main result of this section.

\begin{theorem}
\label{thm:LittUniform}
Let $X/k$ be as in~\emph{\ref{ssec:LittSetup}}. Then there exists an integer~$\ell_X$ such that for all prime numbers $\ell \geq \ell_X$ and all arithmetic representations $\tau \colon \pi_1(X_{\bar{k}}) \to \GL_n(\bbZ_\ell)$ we have
\begin{equation}
\text{$\tau$ is trivial modulo $\ell$} \quad\implies\quad \text{$\tau$ is unipotent.}
\end{equation}
In particular, there exists a positive integer~$N(X)$ such that $N(X,\ell) \leq N(X)$ for all~$\ell$.
\end{theorem}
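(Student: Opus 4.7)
The plan is to combine Corollary~\ref{cor:indexbdd}, Lemma~\ref{lem:nusurj}, and Proposition~\ref{prop:2CBound} to produce, for every sufficiently large prime~$\ell$, an element $g \in \Image(\rho_{\ell,X}) \cap \sQ_\ell(\bbZ_\ell)$ whose multiplier $\alpha = \nu(g) \in \bbZ_\ell^\times$ has infinite order and satisfies $C(\alpha,\ell) < 1/2$. Proposition~\ref{prop:2CBound} will then give $N(X,\ell) \leq 1 + \lfloor 2 C(\alpha,\ell)\rfloor = 1$, which is exactly the implication claimed in the theorem. For the remaining finitely many~$\ell$, Theorem~\ref{thm:Litt} gives $N(X,\ell) < \infty$ individually, and $N(X)$ is obtained by taking a maximum.

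By Corollary~\ref{cor:indexbdd} applied to the semi-abelian variety $M = \Pic^0_{X'/k}$ of Remark~\ref{rem:PicCurve}, there is a constant~$c$, independent of~$\ell$, such that $[\sG_\ell(\bbZ_\ell) : \Image(\rho_{\ell,X})] \leq c$. Hence $\Image(\rho_{\ell,X}) \cap \sQ_\ell(\bbZ_\ell)$ has index at most~$c$ in~$\sQ_\ell(\bbZ_\ell)$. By Lemma~\ref{lem:nusurj}, for almost all~$\ell$ the map $\nu$ is surjective onto $\bbZ_\ell^\times$, and the subgroup
\[
H_\ell \;:=\; \nu\bigl(\Image(\rho_{\ell,X}) \cap \sQ_\ell(\bbZ_\ell)\bigr) \;\subseteq\; \bbZ_\ell^\times
\]
has index at most~$c$; in particular $(\bbZ_\ell^\times)^{c!} \subseteq H_\ell$, since $\bbZ_\ell^\times/H_\ell$ has exponent dividing~$c!$.

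To produce the desired $\alpha$, pick a Teichm\"uller lift $\zeta \in \mu_{\ell-1}(\bbZ_\ell)$ of a generator of $\bbF_\ell^\times$ and set $\alpha := (\zeta\,(1+\ell))^{c!} \in H_\ell$. A short computation gives: the order of $\alpha$ modulo~$\ell$ is $s = (\ell-1)/\gcd(c!, \ell-1)$, so $s \geq 3$ whenever $\ell > 3 c!$; the identity $\zeta^{c! s} = 1$ yields $\alpha^s = (1+\ell)^{c! s}$; and for $\ell > c$ the standard formula $v_\ell((1+\ell)^m - 1) = v_\ell(m) + 1$ (valid for $\ell > 2$) gives $v_\ell(\alpha^s - 1) = 1$. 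Substituting into the definition of $C(\alpha,\ell)$ yields
\[
C(\alpha,\ell) \;=\; \frac{1}{s}\left(1 + \frac{1}{\ell - 1}\right) \;\leq\; \frac{1}{3}\left(1 + \frac{1}{\ell - 1}\right) \;<\; \frac{1}{2}
\]
as soon as $\ell \geq 7$. Moreover $\alpha$ has infinite order because its factor $(1+\ell)^{c!}$ lies in the torsion-free group $1 + \ell \bbZ_\ell$ and is not~$1$. Choose any lift $g \in \Image(\rho_{\ell,X}) \cap \sQ_\ell(\bbZ_\ell)$ with $\nu(g) = \alpha$; Proposition~\ref{prop:2CBound} then gives $N(X,\ell) \leq 1$.

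This handles every $\ell$ outside the finite exceptional set $S$ consisting of $\ell = 2$, the primes where Lemma~\ref{lem:nusurj} fails, and the primes with $\ell \leq \max(3c!,\,7)$. Set $\ell_X := 1 + \max S$; for primes $\ell \in S$, apply Theorem~\ref{thm:Litt} to obtain individual constants $N(X,\ell)$ and define $N(X) := \max\bigl(1,\, \max_{\ell \in S} N(X,\ell)\bigr)$. The one delicate point in the argument is the construction of~$\alpha$: one must arrange simultaneously that $\alpha \bmod \ell$ has order $\geq 3$ and that $\alpha^s$ is close to but not equal to~$1$, and using the fixed exponent~$c!$ (rather than anything depending on~$\ell$) is what makes both conditions, and hence the bound $C(\alpha,\ell) < 1/2$, uniform in~$\ell$.
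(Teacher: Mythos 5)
Your proof is correct and follows essentially the same strategy as the paper: combine Corollary~\ref{cor:indexbdd} and Lemma~\ref{lem:nusurj} to get $\nu\bigl(\Image(\rho_{\ell,X}) \cap \sQ_\ell(\bbZ_\ell)\bigr)$ of uniformly bounded index in $\bbZ_\ell^\times$, then produce an element $\alpha$ with $C(\alpha,\ell)$ tending to $0$ and feed it into Proposition~\ref{prop:2CBound}. The one place you go beyond the paper is a welcome one: the paper merely asserts that an element $\alpha_\ell = z_\ell\exp(y_\ell)$ with $\mathrm{ord}(z_\ell) \geq (\ell-1)/M$ and $v_\ell(y_\ell) < 1 + \log_\ell(M)$ exists in the image, whereas your explicit choice $\alpha = (\zeta(1+\ell))^{c!}$ (using $(\bbZ_\ell^\times)^{c!} \subseteq H_\ell$) realizes both conditions simultaneously in a transparent way and thus fills in a small implicit step.
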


\begin{proof}
By Corollary~\ref{cor:indexbdd} and Lemma~\ref{lem:nusurj} there exist integers $L\geq 2$ and~$M$ such that for all prime numbers $\ell > L$ the image of $\Image(\rho_{\ell,X}) \cap \sQ_\ell(\bbZ_\ell)$ under~$\nu$ has index less than~$M$ in~$\bbZ_\ell^\times$. This means that for every $\ell > L$ we can find an element $g_\ell \in \Image(\rho_{\ell,X}) \cap \sQ_\ell(\bbZ_\ell)$ such that $\alpha_\ell = \nu(g_\ell)$ is of the form
\[
\alpha_\ell = z_\ell \cdot \exp(y_\ell)\qquad \text{(with $z_\ell \in \bbZ_\ell^\times$ a root of unity and $y_\ell \in \ell\bbZ_\ell$)}
\]
such that the order of $z_\ell$ is at least $(\ell-1)/M$ and $v_\ell(y) < 1+  \log_\ell(M)$. Then $\lim_{\ell\to\infty} C(\alpha_\ell,\ell) = 0$ and by Proposition~\ref{prop:2CBound} this gives the result.
\end{proof}

{\small

\bigskip

} 

\noindent
\texttt{anna.cadoret@imj-prg.fr}

\noindent
IMJ-PRG -- Sorbonne Universit\'e, Paris, France
\medskip

\noindent
\texttt{b.moonen@science.ru.nl}

\noindent
Radboud University, IMAPP, Nijmegen, The Netherlands


\begin{thebibliography}{99}
\setlength{\parskip}{0pt}
\setlength{\itemsep}{0pt plus 0.3ex}


\bibitem{BaVSri}
L.~Barbieri-Viale, V.~Srinivas, \textit{Albanese and Picard 1-motives.} M\'em.\ Soc.\ Math.\ Fr.\ (N.S.), 87, (2001).

\bibitem{Bogomolov}
F.~Bogomolov, \textit{Sur l'alg\'ebricit\'e des repr\'esentations $\ell$-adiques.} C. R. Acad. Sci. Paris S\'er. A-B 290 (1980), N\textsuperscript{o}~15, A701--A703.

\bibitem{Bryl}
J.-L.~Brylinski, \textit{``1-motifs'' et formes automorphes (th\'eorie arithm\'etique des domaines de Siegel).} In: Journ\'ees automorphes, 43--106; Publ.\ Math.\ Univ.\ Paris VII, 15, Univ.\ Paris VII, Paris, 1983.

\bibitem{Cadoret}
A.~Cadoret, \textit{An adelic open image theorem for Abelian schemes.}
Int. Math. Res. Not. 2015, no.~20, 10208--10242.

\bibitem{CadMoon}
A.~Cadoret, B.~Moonen, \textit{Integral and adelic aspects of the Mumford--Tate conjecture.} J.\ Inst.\ Math.\ Jussieu, 2018. \url{https://doi.org/10.1017/S1474748018000233}

\bibitem{DelHodge3}
P.~Deligne, \textit{Th\'eorie de Hodge. III.}
Inst.\ Hautes \'Etudes Sci.\ Publ.\ Math.\ No.~44 (1974), 5--77.

\bibitem{DelPi1}
P.~Deligne, \textit{Le groupe fondamental du compl\'ement d'une courbe plane n'ayant que des points doubles ordinaires est ab\'elien (d'apr\`es W.~Fulton).} S\'em.\ Bourbaki, Vol.~1979/80, n\textsuperscript{o}~543, pp. 1--10; in: Lecture Notes in Math., 842, Springer, Berlin-New York, 1981.

\bibitem{Jossen}
P.~Jossen, \textit{On the Mumford-Tate conjecture for 1-motives.} Invent.\ Math.\ 195 (2014), no.~2, 393--439.

\bibitem{LaPi1992}
M.~Larsen, R.~Pink, \textit{On $\ell$-independence of algebraic monodromy groups in compatible systems of representations.} Invent.\ Math.\ 107 (1992), no.~3, 603--636.

\bibitem{LaPi1995}
M.~Larsen, R.~Pink, \textit{Abelian varieties, $\ell$-adic representations, and $\ell$-independence.} Math.\ Ann.\ 302 (1995), no.~3, 561--579.

\bibitem{Litt}
D.~Litt, \textit{Arithmetic representations of fundamental groups I.} Invent.\ Math.\ (2018). \url{https://doi.org/10.1007/s00222-018-0810-4}

\bibitem{SerreCL}
J.-P.~Serre, \textit{Corps locaux (Troisi\`eme \'edition).} Publ.\ de l'Univ.\ de Nancago, No.~VIII. Hermann, Paris, 1968.

\bibitem{SerreRibet}
J.-P.~Serre, \textit{Lettres \`a Ken Ribet du 1/1/1981 et du 29/1/1981.} In: \OE uvres, Volume~IV, number 133.

\bibitem{UllYafQuebec}
E.~Ullmo, A.~Yafaev, \textit{Mumford--Tate and generalised Shafarevich conjectures.} Ann. Math. Qu\'e.~37 (2013), no.~2, 255--284.

\bibitem{Vasiu}
A.~Vasiu, \textit{Some cases of the Mumford--Tate conjecture and Shimura varieties.} Indiana Univ. Math. J.~57 (2008), no.~1, 1--75.

\bibitem{WintenbLang}
J.-P.~Wintenberger, \textit{D\'emonstration d'une conjecture de Lang dans des cas particuliers.} J. reine angew. Math. 553 (2002), 1--16.

\end{thebibliography}
\end{document}